\title{Long paths and Hamiltonicity in random graphs}
\author{Michael Krivelevich
\thanks{School of Mathematical Sciences, Raymond and Beverly
Sackler Faculty of Exact Sciences, Tel Aviv University, Tel Aviv,
6997801, Israel. Email: krivelev@post.tau.ac.il. Research supported in
part by a USA-Israel BSF grant and by a grant from the Israel
Science Foundation.}}
\date{}
\newtheorem{theo}{Theorem}[section]
\newtheorem{prop}[theo]{Proposition}
\newtheorem{lemma}[theo]{Lemma}
\newtheorem{coro}[theo]{Corollary}
\newtheorem{defin}[theo]{Definition}
\newenvironment{proof}{\noindent{\bf Proof\,}}{\hfill$\Box$}
\begin{document}
\maketitle
\begin{abstract}
We discuss several classical results about long paths and Hamilton cycles in random graphs and present accessible versions of their proofs, relying on the Depth First Search (DFS) algorithm and the notion of boosters.
\end{abstract}

%
\section{Introduction}
Long paths and Hamiltonicity are certainly between  the most central and researched topics of modern graph theory. It is thus only natural to expect that they will take a place of honor in the theory of random graphs. And indeed, the typical appearance of long paths and of Hamilton cycles is one of the most thoroughly studied directions in random graphs, with great many diverse and beautiful results obtained over the last fifty or so years.

In this survey we aim to cover some of the most basic theorems about long paths and Hamilton cycles in the classical models of random graphs, such as the binomial random graph or the random graph process. At no means this text should be viewed as a comprehensive coverage of results of this type in various models of random graphs; the reader looking for breadth should rather consult research papers, or a forthcoming monograph on random graphs due to Frieze and Karo\'nski \cite{FK-book}. Instead, we focus on simplicity, aiming to provide accessible proofs of several classical results on the subject, and showcasing the tools successfully applied recently to derive new and fairly simple proofs, such as applications of the Depth First Search (DFS) Algorithm for finding long paths in random graphs, and the notion of boosters.

Although this text should be fairly self-contained mathematically, basic familiarity and hands-on experience with random graphs would certainly be of help for the prospective reader. The standard random graph theory monographs of Bollob\'as \cite{Bol-book} and of Janson, \L uczak and Ruci\'nski \cite{JLR} certainly provide (much more than) the desired background.

The text is based on a mini-course with the same name, delivered by the author at the LMS-EPSRC Summer School on Random Graphs, Geometry and Asymptotic Structure, organized by Dan Hefetz and Nikolaos Fountoulakis at the University of Birmingham in the summer of 2013. The author would like to thank the course organizers for inviting him to deliver the mini-course, and for encouraging him to create lecture notes for the course, that eventually served as a basis for the present text.

\section{Tools}\label{s-tools}
In this section we gather notions and tools to be applied later in the proofs. They include standard graph theoretic notation, asymptotic estimates of binomial coefficients, Chebyshev's and Chernoff's inequalities, basic notation from random graphs (Section \ref{ss-prelim}), as well as algorithmic and combinatorial tools --- the Depth First Search (DFS) algorithm for graph exploration (Section \ref{ss-DFS}), the so called rotation-extension technique of P\'osa, and the notion of boosters (Section \ref{ss-boosters}).

\subsection{Preliminaries}\label{ss-prelim}
\subsubsection{Notation and terminology}
Our graph theoretic notation and terminology are fairly standard. In particular, for a graph $G=(V,E)$ and disjoint vertex subsets $U,W\subset V$, we denote by $N_G(U)$ the external neighborhood of $U$ in $G$: $N_G(U)=\{v\in V-U: v\mbox{ has a neighbor in $U$}\}$. The number of eddes of $G$ spanned by $U$ is denoted by $e_G(U)$, the number of edges of $G$ between $U$ and $W$ is $e_G(U,W)$. When the graph $G$ is clear from the context, we may omit $G$ in the subscript in the above notation.

Path and cycle lengths are measured in edges.

When dealing with graphs on $n$ vertices, we will customarily use $N$ to denote the number of pairs of vertices in such graphs: $N=\binom{n}{2}$.

\subsubsection{Asymptotic estimates}\label{sss-asymp}
We will use the following, quite standard and easily proven, estimates of binomial coefficients. Let $1\le x\le k\le n$ integers. Then
\begin{eqnarray}
\left(\frac{n}{k}\right)^k\le \binom{n}{k}&\le& \left(\frac{en}{k}\right)^k\,,\label{bin1}\\
\frac{\binom{n-x}{k-x}}{\binom{n}{k}}&\le& \left(\frac{k}{n}\right)^x\,,\label{bin2}\\
\frac{\binom{n-x}{k}}{\binom{n}{k}}&\le& e^{-\frac{kx}{n}}\,.\label{bin3}
\end{eqnarray}

\subsubsection{Chebyshev and Chernoff}
Chebyshev's Inequality helps to show concentration of a random variable $X$ around its expectation, based of the first two moments of $X$. It reads as follows: let $X$ be a random variable with expectation $\mu$ and variance $\sigma^2$. Then for any $a>0$,
$$
Pr[|X-\mu|\ge a\sigma]\le \frac{1}{a^2}\,.
$$

The following are very standard bounds on the lower and
the upper tails of the Binomial distribution due to Chernoff:
If $X \sim Bin(n,p)$, then
\begin{itemize}
    \item $\Pr\left(X<(1-a)np\right)<\exp\left(-\frac{a^2np}{2}\right)$ for every $a>0.$
    \item $\Pr\left(X>(1+a)np\right)<\exp\left(-\frac{a^2np}{3}\right)$ for every $0 < a < 1.$
\end{itemize}
Another, trivial yet useful, bound is as follows:
Let $X \sim Bin(n,p)$ and $k \in \mathbb{N}$. Then $$\Pr(X \geq k)
\leq \left(\frac{enp}{k}\right)^k.$$  Indeed,
$\Pr(X \geq k) \leq \binom{n}{k}p^k \leq
\left(\frac{enp}{k}\right)^k$.

\subsubsection{Random graphs, asymptotic notation}
As usually, $G(n,p)$ denotes the probability space of graphs with vertex set $\{1,\ldots,n\}=[n]$, where every pair of distinct elements of $[n]$ is an edge of $G\sim G(n,p)$ with probability $p$, independently of other pairs. For $0\le m\le N$, $G(n,m)$ denotes the probability space of all graphs with vertex set $[n]$ and exactly $m$ edges, where all such graphs are equiprobable: $Pr[G]=\binom{N}{m}^{-1}$. One can expect that the probability spaces $G(n,p)$ and $G(n,m)$ have many similar features, when the corresponding parameters are appropriately tuned: $m=Np$; accurate quantitative statements are available, see \cite{Bol-book}, \cite{JLR}. This similarity frequently allows to prove a desired property for one of the probability spaces, and then to transfer it to the other one.

We will also address briefly the model $D(n,p)$ of directed random graphs, defined as follows: the vertex set is $[n]$, and each of the $n(n-1)=2N$ ordered pairs $1\le i\ne j\le n$ is a directed edge of $D(n,p)$ with probability $p$, independently from other pairs.

We say that an event ${\cal E}_n$ occurs with high probability, or {\bf whp} for brevity, in the probability space $G(n,p)$ if $\lim_{n\rightarrow\infty} Pr[G\sim G(n,p)\in {\cal E}_n]=1$. (Formally, one should rather talk about a sequence of events $\{{\cal E}_n\}_n$ and a sequence of probability spaces $\{G(n,p)\}_n$.) This notion is defined in other (sequences of) probability spaces in a similar way.

Let $k\ge 2$ be an integer, and assume that $0\le p,p_1,\ldots,p_k\le 1$ satisfy $(1-p)=\prod_{i=1}^k (1-p_i)$. Then the random graphs $G\sim G(n,p)$ and $G'=\bigcup_{i=1}^k G(n,p_i)$ have the exact same distribution. Indeed, it is obvious that each pair of vertices $1\le i<j\le n$ is an edge in both graphs $G,G'$ independently of other pairs. In $G$, this edge does not appear with probability $1-p$, and in order for it not appear in $G'$, it should not appear in any of the random graphs $G(n,p_i)$ -- which happens with probability $\prod_{i=1}^k(1-p_i)=1-p$, the same probability as in $G$. This very useful trick is called {\em multiple exposure} as it allows to generate (to expose) a random graph $G\sim G(n,p)$ in stages, by generating the graphs $G(n,p_i)$ sequentially and then by taking their union. In case when the last probability $p_k$ is much smaller than the rest, it is also called {\em sprinkling} -- a typical scenario in this case is to expose the bulk of the random graph $G\sim G(n,p)$ first by generating the graphs $G(n,p_i)$, $i=1,\ldots,k-1$, to come close to a target graph property $P$, and then to add few random edges from the last random graph $G(n,p_k)$ (to sprinkle these few edges) to finish off the job.

\subsection{Depth First Search and its applications for finding long paths}\label{ss-DFS}

The {\em Depth First Search} is a well known graph exploration algorithm, usually applied to discover connected components of an input graph. As it turns out, this algorithm is particularly suitable for finding long paths in graphs, and using it in the context of random graphs can really make wonders. We will see some of them later in this text.

Recall that the DFS (Depth First Search) is a graph search algorithm
that visits all vertices of a (directed or undirected) graph. The algorithm receives as an input a graph $G=(V,E)$; it is also assumed that an order $\pi$ on the
vertices of $G$ is given, and the algorithm prioritizes vertices
according to $\pi$. The algorithm maintains three sets of vertices, letting
$S$ be the set of vertices whose exploration is complete, $T$ be the
set of unvisited vertices, and $U=V\setminus(S\cup T)$, where the
vertices of $U$ are kept in a stack (the last in, first out data
structure). It initiliazes with $S=U=\emptyset$ and
$T=V$, and runs till $U\cup T=\emptyset$. At each round of the
algorithm, if the set $U$ is non-empty, the algorithm queries $T$
for neighbors of the last vertex $v$ that has been added to $U$,
scanning $T$ according to $\pi$. If $v$ has a neighbor $u$  in
$T$, the algorithm deletes $u$ from $T$ and inserts it into $U$. If
$v$ does not have a neighbor in $T$, then $v$ is popped out of $U$
and is moved to $S$. If $U$ is empty, the algorithm chooses the
first vertex of $T$ according to $\pi$, deletes it from $T$ and
pushes it into $U$. In order to complete the exploration of the
graph, whenever the sets $U$ and $T$ have both become empty (at this
stage the connected component structure of $G$ has already been
revealed), we make the algorithm query all remaining pairs of
vertices in $S=V$, not queried before. Figure \ref{fig1} provides an illustration of applying the DFS algorithm.
\begin{figure}[h]
  \centering
  \begin{tabular}{ccc}
    \begin{tikzpicture}[scale=1.0,vertex/.style={draw,circle,color=black,fill=black,inner sep=1}]
      \pgfmathsetmacro{\scal}{2.0}

      \pgfmathsetmacro{\vert}{\scal * 0.866} 
      \pgfmathsetmacro{\verp}{\scal * 1.0} 
      \pgfmathsetmacro{\hori}{\scal * 0.5} 
      \pgfmathsetmacro{\spac}{\scal * 2.1} 

      \clip (-\hori - 0.25, 0.65) rectangle (\spac+\hori+0.25,-\vert-2*\verp-2.5);

      \node[vertex,label=$1$] (1) at (0,0) {};
      \node[vertex,label=$2$] (2) at (\spac, 0) {};
      \node[vertex,label=below:$3$] (3) at (-\hori, -\vert) {};
      \node[vertex,label=below right:$4$] (4) at (\spac-\hori, -\vert) {};
      \node[vertex,label=right:$5$] (5) at (\spac-\hori, -\vert-2*\verp) {};
      \node[vertex,label=right:$6$] (6) at (\spac-\hori, -\vert-\verp){};
      \node[vertex,label=below:$7$] (7) at (\spac+\hori, -\vert) {};
      \node[vertex,label=below:$8$] (8) at (\hori, -\vert) {};

      \draw (1) -- (3) -- (8) -- (1);
      \draw (2) -- (4) -- (7) -- (2);
      \draw (4) -- (6) -- (5);
      \draw[bend right=45] (2) to(6);
    \end{tikzpicture}

    &

    \hspace{1em}

    &
    \small{
    \begin{tabular}[b]{| c | c | c | c |}
      \hline
      Step& $S$             & $U$        &$T$ \\ \hline
      0   &$\emptyset$      &$\emptyset$ &$\{1,\ldots, 8\}$   \\ \hline
      1   &$\emptyset$      &$1$         &$\{2,\ldots, 8\}$   \\ \hline
      2   &$\emptyset$      &$1,3$       & $\{2,4,\ldots,8\}$ \\ \hline
      3   &$\emptyset$      &$1,3,8$     & $\{2,4,\ldots,7\}$ \\ \hline
      4   &$\{8\}$          &$1,3$       & $\{2,4,\ldots,7\}$ \\ \hline
      5   &$\{3,8\}$        &$1$         & $\{2,4,\ldots,7\}$ \\ \hline
      6   &$\{1,3,8\}$      &$\emptyset$ & $\{2,4,\ldots,7\}$ \\ \hline
      7   &$\{1,3,8\}$      &$2$         & $\{4,5,6,7\}$      \\ \hline
      8   &$\{1,3,8\}$      &$2,4$       & $\{5,6,7\}$        \\ \hline
      9   &$\{1,3,8\}$      &$2,4,6$     & $\{5,7\}$          \\ \hline
      10  &$\{1,3,8\}$      &$2,4,6,5$   & $\{7\}$            \\ \hline
      11  &$\{1,3,5,8\}$    &$2,4,6$     & $\{7\}$            \\ \hline
      12  &$\{1,3,5,6,8\}$  &$2,4$       & $\{7\}$            \\ \hline
      13  &$\{1,3,5,6,8\}$  &$2,4,7$     & $\emptyset$        \\ \hline
      14  &$\{1,3,5,6,7,8\}$&$2,4$       & $\emptyset$        \\ \hline
      15  &$\{1,3,4,5,6,7,8\}$&$2$       & $\emptyset$        \\ \hline
      16  &$\{1,\ldots,8\}$  &$\emptyset$ & $\emptyset$        \\ \hline
    \end{tabular}
  }
  \end{tabular}
\caption{Graph $G$ with vertices labeled is on the left, and the protocol of applying the DFS algorithm to $G$ is on the right. Observe that at any point of the algorithm execution the set $U$ spans a path in $G$. }\label{fig1}
\end{figure}

Observe that the DFS algorithm starts revealing a connected
component $C$ of $G$ at the moment the first vertex of $C$ gets into
(empty beforehand) $U$ and completes discovering all of $C$ when $U$
becomes empty again. We call a period of time between two
consecutive emptyings of $U$ an {\em epoch}, each epoch corresponding
to one connected component of $G$. During the execution of the DFS algorithm as depicted in Figure \ref{fig1}, there are two components in the graph $G$, and respectively there are two epochs -- the first is Steps 1--6, and the second is Steps 7--16.

The following properties of the DFS algorithm are immediate to verify:
\begin{itemize}
\item[{\bf (D1)}] at each round of the algorithm one vertex moves, either from
$T$ to $U$, or from $U$ to $S$;
\item[{\bf (D2)}] at any stage of the algorithm, it has been revealed already
that the graph $G$ has no edges between the current set $S$ and the
current set $T$;
\item[{\bf (D3)}] the set $U$ always spans a path (indeed, when a vertex $u$ is
added to $U$, it happens because $u$ is a neighbor of the last
vertex $v$ in $U$; thus, $u$ augments the path spanned by $U$, of
which $v$ is the last vertex).
\end{itemize}

We now exploit the features of the DFS algorithm to derive the existence of long paths in expanding graphs.

\begin{prop}\label{DFS1}
Let $k,l$ be positive integers. Assume that $G=(V,E)$ is a graph on more than $k$ vertices, in which every vertex subset $S$ of size $|S|=k$ satisfies: $|N_G(S)|\ge l$. Then $G$ contains a path of length $l$.
\end{prop}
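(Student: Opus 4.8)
The plan is to run the Depth First Search algorithm on $G$ (with respect to an arbitrary vertex order $\pi$) and to extract a long path from the stack $U$ at a carefully chosen moment. The point is that the expansion hypothesis, combined with invariant (D2) (no edges between $S$ and $T$), forces the stack $U$ to become large, and then invariant (D3) (the set $U$ always spans a path) immediately produces the desired path.

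Concretely, here is how I would proceed. Since $|V|>k$, and since by (D1) the set $S$ is non-decreasing and grows by exactly one vertex whenever a vertex is popped from the top of $U$, there is a first round $t$ of the algorithm at which $|S|=k$; this $t$ is a ``pop'' round, so at round $t-1$ we had $|S|=k-1$ and some vertex $v$ on top of the stack, with $U_{t-1}=U_t\cup\{v\}$ and $v\notin U_t$. Invariant (D2) applied at round $t$ says $G$ has no edge between $S_t$ and $T_t$; since $N_G(S_t)$ also avoids $S_t$ by definition, we get $N_G(S_t)\subseteq U_t$. As $|S_t|=k$, the hypothesis yields $|U_t|\ge |N_G(S_t)|\ge l$, and hence $|U_{t-1}|=|U_t|+1\ge l+1$.

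It remains to observe, via (D3), that at round $t-1$ the set $U_{t-1}$ spans a path of $G$; a path on at least $l+1$ vertices has length at least $l$, and any path of length at least $l$ contains a subpath of length exactly $l$. This completes the argument.

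The only genuinely delicate point, which is where I would be most careful, is the off-by-one in the counting: reading the path directly off $U$ at the round where $|S|=k$ only gives $|U|\ge l$, i.e., a path of length $l-1$, which is not enough. One must step back to the previous round, when the just-popped vertex $v$ is still on the stack and contributes the extra vertex; this is the reason the estimate is applied to $U_{t-1}$ rather than to $U_t$. Everything else is routine bookkeeping with the invariants (D1)--(D3), together with the fact that $|S|$ passes through the value $k$ exactly because it increases in unit steps from $0$ to $|V|>k$.
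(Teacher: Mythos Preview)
Your proof is correct and follows essentially the same approach as the paper's own proof: run DFS, stop at the first moment $|S|=k$, use (D2) to get $N_G(S)\subseteq U$ and hence $|U|\ge l$, then step back one round to recover the just-popped vertex and obtain a path on $l+1$ vertices via (D3). Your explicit attention to the off-by-one issue matches exactly the paper's remark that ``before this move the set $U$ was one vertex larger.''
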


\begin{proof}
Run the DFS algorithm on $G$, with $\pi$ being an arbitrary ordering of $V$. Look at the moment during the algorithm execution when the size of the set $S$ of already processed vertices becomes exactly equal to $k$ (there is such a moment as the vertices of $G$ move into $S$ one by one, till eventually all of them land there). By Property {\bf (D2)} above, the current set $S$ has no neighbors in the current set $T$, and thus $N(S)\subseteq U$, implying $|U|\ge l$.  The last move of the algorithm was to shift a vertex from $U$ to $S$, so before this move the set $U$ was one vertex larger. The set $U$ always spans a path in $G$, by Property {\bf (D3)}. Hence $G$ contains a path of length $l$.
\end{proof}

\medskip

\begin{prop}\label{DFS2}\cite{BKS}
Let $k<n$ be positive integers. Assume that $G=(V,E)$ is a graph on $n$ vertices, containing an edge between any two disjoint subsets $S,T\subset V$ of size $|S|=|T|=k$. Then $G$ contains a path of length $n-2k+1$ and a cycle of length at least $n-4k+4$.
\end{prop}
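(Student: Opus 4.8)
The plan is to run the DFS algorithm on $G$ and argue as in Proposition~\ref{DFS1}, the new ingredient being that the hypothesis forbids $S$ and $T$ from being large at the same time. Indeed, at every stage of the execution Property~\textbf{(D2)} guarantees that there are no $G$-edges between the current set $S$ and the current set $T$; so if at some moment both $|S|\ge k$ and $|T|\ge k$ held, we could pick subsets of size exactly $k$ inside $S$ and inside $T$ and obtain two disjoint $k$-sets with no edge between them, contradicting the assumption. Hence throughout the run $\min(|S|,|T|)\le k-1$.

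Next I would choose a suitable moment to stop and look. By Property~\textbf{(D1)} exactly one vertex moves per round, so $|T|$ decreases from $n$ to $0$ in unit steps, and there is a unique round $\sigma$ at which a vertex passes from $T$ to $U$ while $|T|$ drops from $k$ to $k-1$. Just before $\sigma$ one has $|T|=k$, hence $|S|\le k-1$ by the previous paragraph; since the move at $\sigma$ does not touch $S$, immediately after $\sigma$ we still have $|S|\le k-1$ together with $|T|=k-1$. As $|S|+|U|+|T|=n$, this forces $|U|\ge n-2k+2$, and by Property~\textbf{(D3)} the set $U$ spans a path; thus $G$ contains a path on at least $n-2k+2$ vertices, i.e.\ of length at least $n-2k+1$.

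It remains to convert this path into a long cycle, which is a short self-contained combinatorial step no longer involving the DFS. Write the path as $P=u_1u_2\cdots u_m$ with $m\ge n-2k+2$; we may assume $n\ge 4k-1$ (when $n\le 4k-2$ the quantity $n-4k+4$ is at most $2$ and there is nothing to prove for the cycle), which gives $m\ge 2k+1$, so $A=\{u_1,\dots,u_k\}$ and $B=\{u_{m-k+1},\dots,u_m\}$ are disjoint $k$-sets. By hypothesis some edge $u_iu_j$ joins $A$ to $B$, with $i\le k$ and $j\ge m-k+1$; then $j-i\ge 2$, so $u_iu_{i+1}\cdots u_ju_i$ is a genuine cycle, of length $j-i+1\ge m-2k+2\ge n-4k+4$.

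The mathematical heart of the proof is the one-line observation that $\min(|S|,|T|)\le k-1$ at all times; after that everything is immediate from \textbf{(D1)}--\textbf{(D3)}. The only place I would be careful is the arithmetic with the additive constants --- inspecting the DFS execution just \emph{after} $|T|$ passes below $k$ so that $|U|$ actually reaches $n-2k+2$, and checking that the two terminal blocks of $P$ of size $k$ are disjoint --- so I expect no real obstacle beyond this bookkeeping.
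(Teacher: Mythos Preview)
Your proof is correct and follows essentially the same route as the paper's: run DFS, use \textbf{(D2)} to find a moment where $|S|,|T|\le k-1$, extract the path from \textbf{(D3)}, then close a cycle via an edge between the two terminal $k$-blocks. The only cosmetic difference is the choice of stopping time --- the paper looks at the single moment when $|S|=|T|$ (which exists since $|S|-|T|$ increases by exactly $1$ at every step), while you stop at the round where $|T|$ drops below $k$; both choices yield $|U|\ge n-2k+2$ immediately.
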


\begin{proof}
Run the DFS algorithm on $G$, with $\pi$ being an arbitrary ordering of $V$. Consider the moment during the algorithm execution when $|S|=|T|$ --- there is such a moment, by Property {\bf (D1)}. Since $G$ has no edges between the current $S$ and the current $T$ by Property {\bf (D2)}, it follows by the proposition's assumption that both sets $S$ and $T$ are of size at most $k-1$. This leaves us with the set $U$ whose size satisfies: $|U|\ge n-2k+2$. Since $U$ always spans a path by {\bf (D3)}, we obtain a path $P$ of desired length. To argue about a cycle, take the first and the last $k$ vertices of $P$. By the proposition's assumption there is an edge between these two sets, this edge obviously closes a cycle with $P$, whose length is at least $n-4k+4$, as required.
\end{proof}

\medskip

As we have hinted already, the DFS algorithm is well suited to handle directed graphs too. Similar results to those stated above can be obtained for the directed case. Here is an analog of Proposition \ref{DFS2} for directed graphs; the proof is the same, mutatis mutandis.

\begin{prop}\label{DFS3}\cite{BKS}
Let $k<n$ be positive integers. Let $G=(V,E)$ be a directed graph on $n$ vertices, such that for any ordered pair of disjoint subsets $S,T\subset V$ of size $|S|=|T|=k$, $G$ has a directed edge from $S$ to $T$. Then $G$ contains a directed path of length $n-2k+1$ and a directed cycle of length at least $n-4k+4$.
\end{prop}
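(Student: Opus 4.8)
The plan is to mimic the proof of Proposition \ref{DFS2} verbatim, running the directed version of the DFS algorithm and tracking the three sets $S$, $U$, $T$. First I would spell out how DFS adapts to a directed graph: the last vertex $v$ of the stack $U$ is now queried for \emph{out}-neighbors in $T$, and $v$ is moved from $U$ to $S$ precisely when it currently has no out-neighbor in $T$. Since $T$ only loses vertices as the algorithm proceeds, $v$ never acquires an out-neighbor in $T$ afterwards, so the directed analogue of Property {\bf (D2)} holds: at every stage there is no directed edge from the current $S$ to the current $T$. Property {\bf (D1)} is unchanged (each round moves one vertex, either $T\to U$ or $U\to S$), and in the directed setting {\bf (D3)} says that, reading the stack $U$ from bottom to top as $u_1,\ldots,u_m$, we have the directed edges $(u_1,u_2),\ldots,(u_{m-1},u_m)$, i.e. $U$ spans a \emph{directed} path from $u_1$ to $u_m$.

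Next I would run directed DFS on $G$ with an arbitrary order $\pi$ and stop at the first moment when $|S|=|T|$; such a moment exists by {\bf (D1)}, exactly as in the undirected case. By the directed {\bf (D2)} there is no directed edge from this $S$ to this $T$, so by the hypothesis of the proposition we cannot have $|S|\ge k$ and $|T|\ge k$ simultaneously (otherwise pick $k$-subsets $S'\subseteq S$, $T'\subseteq T$, which by assumption would span a forbidden directed edge from $S'$ to $T'$). Hence $|S|=|T|\le k-1$, so $|U|=n-|S|-|T|\ge n-2k+2$, and by {\bf (D3)} the directed path spanned by $U$ has length $|U|-1\ge n-2k+1$. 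This gives the first assertion.

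For the directed cycle I would take the directed path $P=u_1u_2\cdots u_m$ with $m\ge n-2k+2$ just produced, and set $S'=\{u_{m-k+1},\ldots,u_m\}$ and $T'=\{u_1,\ldots,u_k\}$, which are disjoint in the regime where the claimed bound is meaningful. Applying the hypothesis to the \emph{ordered} pair $(S',T')$ yields a directed edge $(u_i,u_j)$ with $i\ge m-k+1$ and $j\le k$; together with the subpath $u_j\to u_{j+1}\to\cdots\to u_i$ of $P$, this edge closes a directed cycle of length $i-j+1\ge (m-k+1)-k+1=m-2k+2\ge n-4k+4$, as required.

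Since every step is a nearly word-for-word translation of the undirected argument, I do not anticipate a genuine obstacle. The one thing to get right --- and the place where a careless adaptation could go wrong --- is the bookkeeping of edge orientations: one must query out-neighbors (not in-neighbors) when growing the stack, observe that this is exactly what makes the directed {\bf (D2)} read ``no edge \emph{from} $S$ \emph{to} $T$'', and then orient the two $k$-sets in the cycle step so that the guaranteed edge runs from the end of $P$ back toward its beginning.
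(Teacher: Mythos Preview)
Your proposal is correct and is exactly the ``mutatis mutandis'' argument the paper alludes to: you have faithfully carried over the proof of Proposition~\ref{DFS2} to the directed setting, with the right bookkeeping of orientations (out-neighbor queries, no edge from $S$ to $T$, and the cycle-closing edge directed from the last $k$ vertices of $P$ to the first $k$). There is nothing to add.
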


\subsection{P\'osa's Lemma and boosters}\label{ss-boosters}
In this section we present yet another technique for showing the existence of long paths in graphs. This technique, introduced by P\'osa in 1976 \cite{Pos76} in his research on Hamiltonicity of random graphs, is applicable not only for arguing about long paths, but also for various Hamiltonicity questions. And indeed, we will see its application in this context later.

In quite informal terms, P\'osa's Lemma guarantees that expanding graphs not only have long paths, but also provide a very convenient backbone for augmenting a graph to a Hamiltonian one by adding new (random) edges. The fact that expanders are good for getting long paths is already not new to us --- Propositions \ref{DFS1} and \ref{DFS2} are just about this. P\'osa's Lemma however quantifies things somewhat differently, and as a result yields further benefits.

We start by defining formally the notion of an expander.

\begin{defin}\label{expander}
For a positive integer $k$ and a positive real $\alpha$, a graph $G=(V,E)$ is a $(k,\alpha)$-expander if
$|N_G(U)|\ge \alpha |U|$ for every
subset $U\subset V$ of at most $k$ vertices.
\end{defin}

By the way of example, Proposition \ref{DFS1} can now be rephrased (in a somewhat weaker form -- we now require the expansion of all sets of size {\em up to} $k$) as follows: if $G$ is a $(k,\alpha)$-expander, then $G$ has a path of length at least $\alpha k$. For technical reasons, P\'osa's Lemma uses the particular case $\alpha=2$.

The idea behind P\'osa's approach is fairly simple and natural --- one can start with a (long) path $P$, and then, using extra edges, perform a sequence of simple deformations (rotations), till it will be possible to close the deformed path to a cycle, or to extend it by appending a vertex outside $V(P)$; then this can be repeated if necessary to create an even longer path, or to close it to a Hamilton cycle. The approach is thus called naturally the {\em rotation-extension} technique. Formally, let $P=x_0x_1\ldots x_h$ be a path in a graph $G=(V,E)$, starting at a vertex $x_0$. Suppose $G$ contains an edge $(x_i,x_h)$ for some
$0\le i<h-1$. Then a new path $P'$ can be obtained by rotating the
path $P$ at $x_i$, i.e. by adding the edge $(x_i,x_h)$ and erasing
$(x_i,x_{i+1})$. This operation is called an {\em elementary
rotation} and it depicted in Figure \ref{fig2}. Note that the obtained path $P'$ has the same length $h$ and
starts at $x_0$. We can therefore apply an elementary rotation to
the newly obtained path $P'$, resulting in a path $P''$ of length
$h$, and so on. If after a number of rotations an endpoint $x$ of
the obtained path $Q$ is connected by an edge to a vertex $y$
outside $Q$, then $Q$ can be extended by adding the edge $(x,y)$.

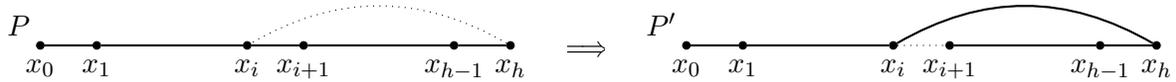
\begin{figure}
  \begin{tabular}{ccc}
    \begin{tikzpicture}[vertex/.style={draw,circle,color=black,fill=black,inner sep=1}]
      \pgfmathsetmacro{\elen}{0.75}
      \pgfmathsetmacro{\plen}{2.0}

      \node[above left] at (0,0) {$P$};

      \node[vertex,label=below:$x_0$] (0) at (0, 0) {};
      \node[vertex,label=below:$x_1$] (1) at (\elen, 0) {};
      \node[vertex,label=below:$x_i$] (i) at (\elen+\plen,0) {};
      \node[vertex,label=below:$x_{i+1}$] (ip) at (2*\elen+\plen,0) {};
      \node[vertex,label=below:$x_{h-1}$] (nm) at (2*\elen+2*\plen,0) {};
      \node[vertex,label=below:$x_h$] (n) at (3*\elen+2*\plen,0) {};

      \clip (-0.5, -0.5) rectangle (3*\elen+2*\plen+0.25,0.65);

      \draw[thick] (0) -- (1) -- (i) -- (ip) -- (nm) -- (n);
      \draw[dotted,bend right=30] (n) to (i);
    \end{tikzpicture}

    &

    \begin{tikzpicture}
      \clip (-0.25, -0.5) rectangle (0.25,0.65);

      \node (Posa) at (0,0) {$\Longrightarrow$};
    \end{tikzpicture}

    &

    \begin{tikzpicture}[vertex/.style={draw,circle,color=black,fill=black,inner sep=1}]
      \pgfmathsetmacro{\elen}{0.75}
      \pgfmathsetmacro{\plen}{2.0}

      \node[above left] at (0,0) {$P'$};

      \node[vertex,label=below:$x_0$] (0) at (0, 0) {};
      \node[vertex,label=below:$x_1$] (1) at (\elen, 0) {};
      \node[vertex,label=below:$x_i$] (i) at (\elen+\plen,0) {};
      \node[vertex,label=below:$x_{i+1}$] (ip) at (2*\elen+\plen,0) {};
      \node[vertex,label=below:$x_{h-1}$] (nm) at (2*\elen+2*\plen,0) {};
      \node[vertex,label=below:$x_h$] (n) at (3*\elen+2*\plen,0) {};

      \clip (-0.5, -0.5) rectangle (3*\elen+2*\plen+0.25,0.65);

      \draw[thick] (0) -- (1) -- (i);
      \draw[dotted] (i) -- (ip);
      \draw[thick] (ip) -- (nm) -- (n);
      \draw[bend right=30,thick] (n) to (i);
    \end{tikzpicture}
  \end{tabular}
  \caption{Elementary rotation is applied to a path $P$ to get a new path $P'$ with the same vertex set.}\label{fig2}
\end{figure}


The power of the rotation-extension technique of P\'osa hinges on
the following lemma.

\begin{lemma}\label{lem-Posa}\cite{Pos76}
Let $G$ be a graph, $P$ be a longest path in $G$ and ${\cal P}$ be
the set of all paths obtainable from $P$ by a sequence of elementary
rotations. Denote by $R$ the set of ends of paths in ${\cal P}$, and
by $R^-$ and $R^+$ the sets of vertices immediately preceding and
following the vertices of $R$ along $P$, respectively. Then $ N_G(R)
\subset R^-\cup R^+ $.
\end{lemma}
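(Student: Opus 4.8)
The plan is to fix a longest path $P = x_0 x_1 \ldots x_h$ and argue by contradiction, supposing some vertex $v \in N_G(R)$ lies outside $R^- \cup R^+$. Since $v$ has a neighbor $w \in R$, there is a path $Q \in \mathcal{P}$ obtained from $P$ by elementary rotations whose endpoint is $w$; note all paths in $\mathcal{P}$ have the same vertex set $V(P)$ and the same length $h$ (each elementary rotation preserves both). First I would record the basic structural fact that $v \in V(P)$: if $v$ were outside $V(P) = V(Q)$, then since $v$ is adjacent to the endpoint $w$ of $Q$, we could extend $Q$ by the edge $vw$ to a path of length $h+1$, contradicting maximality of $P$. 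So $v = x_j$ for some $j$, and both its path-neighbors $x_{j-1}, x_{j+1}$ along $P$ are well-defined (boundary cases where $v = x_0$ or $v = x_h$ need a separate, easy remark — those endpoints are themselves in $R$, and a longest path's endpoints have all their neighbors on the path).

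The heart of the argument is the following claim: when we rotate $Q$ at $v = x_j$ using the edge $vw$ (legal since $w$ is the current endpoint and $v$ is an internal vertex of $Q$, provided $v \neq w$ and $v$ is not the neighbor of $w$ on $Q$), the two new endpoints that can arise are exactly the two $Q$-neighbors of $v$. More precisely, writing $Q$ with endpoint $w$, the vertex $v$ sits somewhere in the interior of $Q$ with two neighbors along $Q$; rotating at $v$ via $vw$ deletes one of these two edges of $Q$ incident to $v$ and creates the new endpoint as the corresponding $Q$-neighbor of $v$. Hence that new endpoint is one of the two $Q$-neighbors of $v$, and it lies in $R$ (it is the end of a path obtained from $P$ by one more elementary rotation). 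The key lemma I would isolate and prove is that the $Q$-neighbors of any vertex $x \notin R$ coincide with its $P$-neighbors — in other words, rotations only ever ``rewire'' the path near vertices of $R$. This is proved by induction on the number of rotations: a single elementary rotation changes the path-neighborhood only at the pivot vertex $x_i$ and at the old endpoint $x_h$ and new endpoint $x_i$... all of which are in $R$ (the old endpoint of $P$ is in $R$ trivially; the pivot becomes adjacent to the old endpoint which is fine since we only claim something about vertices NOT in $R$), so for $x \notin R$ the incident path-edges are untouched.

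Granting that lemma, since $v \notin R$ by assumption, its $Q$-neighbors equal its $P$-neighbors $x_{j-1}$ and $x_{j+1}$. Therefore the new endpoint produced by rotating $Q$ at $v$ is either $x_{j-1}$ or $x_{j+1}$; but this new endpoint is in $R$, so $\{x_{j-1}, x_{j+1}\} \cap R \neq \emptyset$, which by definition of $R^-$ and $R^+$ means $v = x_j \in R^+ \cup R^-$ — contradicting $v \notin R^- \cup R^+$. One must also dispatch the degenerate cases where the rotation at $v$ is not actually an elementary rotation: namely $v = w$ (impossible, since then $w \in R$ would be a neighbor of itself), or $v$ is the $Q$-neighbor of $w$ (then $v \in R^-$ or $R^+$ directly, since $w \in R$), or $v$ is the vertex immediately before $w$'s neighbor — these are the only ways the rotation degenerates, and in each the conclusion $v \in R^- \cup R^+$ holds for free.

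The main obstacle I anticipate is bookkeeping the degenerate/boundary cases cleanly — precisely identifying when rotating $Q$ at $v$ via $vw$ fails to be a legal elementary rotation, and checking that in each such failure mode $v$ already lies in $R^- \cup R^+$ — rather than the conceptual core, which is the short induction showing that rotations leave the path-structure intact away from $R$. A secondary subtlety is making sure the induction hypothesis is stated with enough room: one wants something like ``for every $Q \in \mathcal{P}$ and every $x \notin R$, the $Q$-neighbors of $x$ equal the $P$-neighbors of $x$,'' and one must be careful that $R$ is defined globally (over all of $\mathcal{P}$) so that it does not shrink as we pass to a sub-collection of rotations — this is exactly why the lemma quantifies $R$ as the ends of \emph{all} paths obtainable from $P$, and I would lean on that to keep the induction self-consistent.
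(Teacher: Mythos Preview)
Your approach is essentially identical to the paper's: both arguments hinge on the same key observation that any vertex outside $R$ keeps its $P$-neighbors in every path of $\mathcal{P}$ (the paper states this in one sentence, you frame it as an explicit induction on the number of rotations), and both then rotate at $v$ to force a $P$-neighbor of $v$ into $R$, yielding $v\in R^-\cup R^+$. One small slip: in your inductive step the new endpoint after rotating at pivot $x_i$ is $x_{i+1}$, not $x_i$; and your worry about degenerate cases is largely unnecessary, since once you know $v\notin R$ has the same path-neighbors in $Q$ as in $P$, the case where $v$ is already the $Q$-neighbor of $w$ immediately gives $w$ as a $P$-neighbor of $v$, hence $v\in R^-\cup R^+$.
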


\begin{proof} Let $x\in R$ and $y\in V(G)\setminus (R\cup R^-\cup R^+)$,
and consider a path $Q\in{\cal P}$ ending at $x$. If $y\in
V(G)\setminus V(P)$, then $(x,y)\not\in E(G)$, as otherwise the path
$Q$ can be extended by adding $y$, thus contradicting our assumption
that $P$ is a longest path. Suppose now that $y\in V(P)\setminus
(R\cup R^-\cup R^+)$. Then $y$ has the same neighbors in every path
in ${\cal P}$, because an elementary rotation that removed one of
its neighbors along $P$ would, at the same time, put either this
neighbor or $y$ itself in $R$ (in the former case $y\in R^-\cup
R^+$).  Then if $x$ and $y$ are adjacent, an elementary rotation
applied to $Q$ produces a path in ${\cal P}$ whose endpoint is a
neighbor of $y$ along $P$, while belonging itself to $R$, a contradiction. Therefore in both cases
$x$ and $y$ are non-adjacent.
\end{proof}

\medskip

The following immediate consequence of Lemma \ref{lem-Posa} is frequently applied in Hamiltonicity problems in random graphs.
\begin{coro}\label{cor-Posa2}
Let $h,k$ be positive integers. Let $G=(V,E)$ be a graph such that
its longest path has length $h$, but it contains no cycle of length
$h+1$. Suppose furthermore that $G$ is a $(k,2)$-expander. Then there
are at least $\frac{(k+1)^2}{2}$ non-edges in $G$ such that if any
of them is turned into an edge, then the new graph contains an
$(h+1)$-cycle.
\end{coro}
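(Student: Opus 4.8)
The plan is to invoke P\'osa's Lemma (Lemma~\ref{lem-Posa}) twice --- rotating once from each end of a longest path --- and then count the resulting ``boosters''. Fix a longest path $P=x_0x_1\ldots x_h$ in $G$, let $\mathcal{P}$ be the family of all paths obtainable from $P$ by sequences of elementary rotations that keep the endpoint $x_0$ fixed, and let $R$ be the set of free (non-$x_0$) endpoints of the paths in $\mathcal{P}$. The first point to establish is that $|R|\ge k+1$. Suppose not; then $|R|\le k$, so the $(k,2)$-expansion gives $|N_G(R)|\ge 2|R|$, while Lemma~\ref{lem-Posa} gives $N_G(R)\subseteq R^-\cup R^+$. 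Since the predecessor map along $P$ is injective we have $|R^-|\le|R|$, and since $x_h\in R$ (because $P\in\mathcal{P}$) and $x_h$ has no successor along $P$, we have $|R^+|\le|R|-1$; hence $|N_G(R)|\le 2|R|-1$, a contradiction. Thus $|R|\ge k+1$.

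Next, for each $y\in R$ fix a path $P_y\in\mathcal{P}$ whose endpoints are $x_0$ and $y$. Elementary rotations preserve length, so $P_y$ is itself a longest path (of length $h$); apply the rotation process to $P_y$, this time keeping $y$ fixed, to obtain a family $\mathcal{P}_y$ of length-$h$ paths all having $y$ as an endpoint, and let $R_y$ be the set of the free endpoints of these paths. Running the argument of the previous paragraph with $P_y$ in place of $P$ (now the free end $x_0$ of $P_y$ plays the role of $x_h$) shows $|R_y|\ge k+1$; moreover $y\notin R_y$, since the two endpoints of any path are distinct.

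Now for every $y\in R$ and every $z\in R_y$ there is, by construction, a path of length $h$ in $G$ joining $y$ and $z$. If $\{y,z\}$ were an edge of $G$, this path together with that edge would form a cycle of length $h+1$, contrary to hypothesis; so $\{y,z\}$ is a non-edge, and adding it to $G$ creates an $(h+1)$-cycle. The number of ordered pairs $(y,z)$ with $y\in R$ and $z\in R_y$ equals $\sum_{y\in R}|R_y|\ge|R|\cdot(k+1)\ge(k+1)^2$, and each unordered pair $\{y,z\}$ (with $y\ne z$) is counted at most twice; hence $G$ has at least $(k+1)^2/2$ non-edges of the required kind.

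The one step that really needs care is the bound $|R|\ge k+1$: the containment $N_G(R)\subseteq R^-\cup R^+$ by itself only yields $|N_G(R)|\le 2|R|$, which exactly matches what the $(k,2)$-expansion provides, so the argument lives or dies on the extra ``$-1$'' coming from an endpoint of $P$ lying in $R$ and contributing no successor. The same subtlety reappears in bounding $|R_y|$. The remaining ingredients --- that $P_y$ is again a longest path so that Lemma~\ref{lem-Posa} legitimately applies to it, that rotations never move the free endpoint onto the fixed one (so $z\ne y$), and the final double count --- are routine.
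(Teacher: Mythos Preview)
Your proof is correct and follows essentially the same route as the paper's: apply P\'osa's Lemma to a longest path to get $|R|\ge k+1$ via the ``$-1$'' from the missing successor of the far endpoint, then rotate again from each $y\in R$ to get $|R_y|\ge k+1$, and double-count the resulting non-edges. The only cosmetic difference is that the paper explicitly restricts to a $(k+1)$-element subset of $R$ before the second round of rotations, whereas you use all of $R$ and then only invoke $|R|\ge k+1$ in the final count; this changes nothing.
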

\begin{proof} Let $P=x_0x_1\ldots x_h$ be a longest path in $G$ and let
$R,R^-,R^+$ be as in Lemma \ref{lem-Posa}. Notice that $|R^-|\leq
|R|$ and $|R^+|\le |R|-1$, since $x_h \in R$ has no following vertex
on $P$ and thus does not contribute an element to $R^+$.

According to Lemma \ref{lem-Posa},
 $$|N_G(R)| \leq |R^-\cup
R^+| \leq 2|R| - 1~,$$ and it follows that $|R|>k$. (Here the choice $\alpha=2$ in the definition of a $(k,\alpha)$-expander plays a crucial role.) Moreover,
$(x_0,v)$ is not an edge for any $v\in R$ (there is no $(h+1)$-cycle
in the graph), whereas adding any edge $(x_0,v)$ for $v\in R$
creates an $(h+1)$-cycle.

Fix a subset $\{y_1,\ldots,y_{k+1}\}\subset R$. For every $y_i\in
R$, there is a path $P_i$ ending at $y_i$, that can be obtained from
$P$ by a sequence of elementary rotations. Now fix $y_i$ as the
starting point of $P_i$ and let $Y_i$ be the set of endpoints of all
paths obtained from $P_i$ by a sequence of elementary rotations. As
before, $|Y_i|\ge k+1$, no edge joins $y_i$ to $Y_i$, and adding any
such edge creates a cycle of length $h+1$. Altogether we have found
$(k+1)^2$ pairs $(y_i,x_{ij})$ for $x_{ij}\in Y_i$. As every
non-edge is counted at most twice, the conclusion of the lemma
follows.
\end{proof}

\medskip

The reason we are after a cycle of length $h+1$ in the above
argument is that if $h+1=n$, then a Hamilton cycle is created.
Otherwise, if the graph is connected, then there will be an edge $e$
connecting a newly created cycle $C$ of length $h+1$ with a vertex
outside $C$. Then opening $C$ up and appending $e$ in an obvious way
creates a longer path in $G$.

In order to utilize quantitatively the above argument, we introduce
the notion of boosters.

\begin{defin}\label{booster}
Given a graph $G$, a non-edge $e=(u,v)$ of $G$ is called a {\em
booster} if adding $e$ to $G$ creates a graph $G'$, which is
Hamiltonian or whose longest path is longer than that of $G$.
\end{defin}
Note that technically every non-edge of a Hamiltonian graph $G$ is a
booster by definition.

Boosters advance a graph towards Hamiltonicity when added; adding
sequentially $n$ boosters clearly brings any graph on $n$ vertices
to Hamiltonicity.

We thus conclude from the previous discussion:
\begin{coro}\label{cor-Posa3}
Let $G$ be a connected non-Hamiltonian $(k,2)$-expander. Then $G$ has at
least $\frac{(k+1)^2}{2}$ boosters.
\end{coro}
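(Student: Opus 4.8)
The plan is to reduce everything to Corollary \ref{cor-Posa2}. First I would let $h$ denote the length of a longest path in $G$ and argue that $G$ contains no cycle of length $h+1$: indeed, if $C$ were such a cycle, then since $G$ is non-Hamiltonian we cannot have $|V(C)|=h+1=n$, so $h+1<n$; then the connectivity of $G$ supplies an edge from $V(C)$ to a vertex outside $C$, and cutting $C$ open at an endpoint of that edge yields a path of length $h+1$, contradicting the maximality of $h$. Hence, for this value of $h$, the hypotheses of Corollary \ref{cor-Posa2} (longest path of length exactly $h$, no $(h+1)$-cycle, $(k,2)$-expander) are all satisfied.

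Applying Corollary \ref{cor-Posa2} then yields a set of at least $\frac{(k+1)^2}{2}$ non-edges of $G$, each of which, when turned into an edge, produces a graph containing an $(h+1)$-cycle. The next step is to check that every such non-edge $e$ is in fact a booster in the sense of Definition \ref{booster}. Adding $e$ gives a graph $G'$ with a cycle $C$ of length $h+1$. If $h+1=n$, then $C$ is a Hamilton cycle and $G'$ is Hamiltonian, so $e$ is a booster. If $h+1<n$, then $G'$ is still connected (adding an edge never disconnects a graph), so some edge of $G'$ joins $V(C)$ to a vertex outside $C$; opening $C$ at an endpoint of that edge and appending it produces a path of length $h+1$ in $G'$, strictly longer than the longest path of $G$. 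In either case $e$ is a booster, and so $G$ has the claimed $\frac{(k+1)^2}{2}$ boosters.

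There is essentially no hard step here: the real content is already packaged inside Corollary \ref{cor-Posa2} (and ultimately in P\'osa's Lemma), and what remains is the routine case analysis $h+1=n$ versus $h+1<n$ together with repeated appeals to connectivity. The only point deserving mild care is the very first one --- one must rule out that $G$ already contains an $(h+1)$-cycle before Corollary \ref{cor-Posa2} is applicable --- and this is precisely where both the hypothesis ``connected'' and the hypothesis ``non-Hamiltonian'' get used.
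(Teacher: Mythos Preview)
Your proof is correct and follows exactly the approach the paper intends: the paper states Corollary~\ref{cor-Posa3} as an immediate consequence of Corollary~\ref{cor-Posa2} together with the preceding discussion (the paragraph explaining that an $(h+1)$-cycle either is a Hamilton cycle or, via connectivity, yields a longer path). You have simply made explicit the verification that $G$ has no $(h+1)$-cycle before invoking Corollary~\ref{cor-Posa2}, which the paper leaves implicit.
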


\section{Long paths in random graphs}\label{s-longpaths}

In this section we treat the appearance of long paths and cycles in sparse random graphs. We will work with the probability space $G(n,p)$ of binomial random graphs, analogous results for the sister model $G(n,m)$ can be either proven using very similar arguments, or derived using available equivalence statements between the two models.

Our goal here is two-fold: we first prove that already in the super-critical regime $p=\frac{1+\epsilon}{n}$, the random graph $G(n,p)$ contains typically a path of length linear in $n$; then we prove that in the regime $p=\frac{C}{n}$, the random graph $G(n,p)$ has typically a path, covering the proportion of vertices tending to 1 as the constant $C$ increases. We will invoke the approaches and results developed in Section \ref{ss-DFS} (the Depth First Search Algorithm and its consequences) to achieve both of these goals, in fact in a rather short and elegant way.

\subsection{Linearly long paths in the supercritical regime}
In their groundbreaking paper \cite{ER60} from 1960, Paul Erd\H os
and Alfr\'ed R\'enyi made the following fundamental discovery: the
random graph $G(n,p)$ undergoes a remarkable phase transition around
the edge probability $p(n)=\frac{1}{n}$. For any constant
$\epsilon>0$, if $p=\frac{1-\epsilon}{n}$, then $G(n,p)$ has {\bf whp} all
connected components of size at most logarithmic in $n$, while for
$p=\frac{1+\epsilon}{n}$ {\bf whp} a connected component of linear size,
usually called the giant component, emerges in $G(n,p)$ (they also
showed that {\bf whp} there is a unique linear sized component). The
Erd\H os-R\'enyi paper, which launched the modern theory of random
graphs, has had enormous influence on the development of the field.
We will be able to derive both parts of this result very soon.

Although for the super-critical case $p=\frac{1+\epsilon}{n}$ the result of Erd\H{o}s and R\'enyi shows a typical existence of a linear sized connected component, it does not imply that a longest path in such a random graph is {\bf whp} linearly long. This was established some 20 years later by Ajtai, Koml\'os and Szemer\'edi \cite{AKS81}. In this section we present a fairly simple proof of their result.  We will not attempt to
achieve the best possible absolute constants, aiming rather for
simplicity. Our treatment follows closely that of \cite{KS13}.

The most fundamental idea of the proof is to run the DFS algorithm on a random graph $G\sim G(n,p)$, constructing the graph ``on the fly", as the algorithm progresses.  We first fix the
order $\pi$ on $V(G)=[n]$ to be the identity permutation. When
the DFS algorithm is fed with a sequence of i.i.d. Bernoulli($p$)
random variables $\bar{X}=(X_i)_{i=1}^N$, so that is gets its $i$-th
query answered positively if $X_i=1$ and answered negatively
otherwise, the so obtained graph is clearly distributed according to
$G(n,p)$. Thus, studying the component structure of $G$ can be
reduced to studying the properties of the random sequence $\bar{X}$. This is a very useful trick, as it allows to ``flatten" the random graph by replacing an inherently two-dimensional structure (a graph) by a one-dimensional one (a sequence of bits).
In particular (here and later we use the DFS algorithm-related notation of Section \ref{ss-DFS}), observe crucially that as long as $T\ne\emptyset$,
every positive answer to a query results in a vertex being moved
from $T$ to $U$, and thus after $t$ queries and assuming
$T\ne\emptyset$ still, we have $|S\cup U|\ge \sum_{i=1}^t X_i$. (The
last inequality is strict in fact as the first vertex of each
connected component is moved from $T$ to $U$ ``for free", i.e.,
without need to get a positive answer to a query.) On the other
hand, since the addition of every vertex, but the first one in a
connected component, to $U$ is caused by a positive answer to a
query, we have at time $t$: $|U|\le 1+\sum_{i=1}^t X_i$.

The probabilistic part of our argument is provided by the following
quite simple lemma.

\begin{lemma}\label{le3-1}
Let $\epsilon>0$ be a small enough constant. Consider the sequence
$\bar{X}=(X_i)_{i=1}^N$ of i.i.d. Bernoulli random variables with
parameter $p$.
\begin{enumerate}
\item Let $p=\frac{1-\epsilon}{n}$.  Let $k=\frac{7}{\epsilon^2}\ln
n$. Then {\bf whp} there is no interval of length $kn$ in $[N]$, in which
at least $k$ of the random variables $X_i$ take value 1.
\item Let $p=\frac{1+\epsilon}{n}$. Let $N_0=\frac{\epsilon
n^2}{2}$. Then {\bf whp} $\left|\sum_{i=1}^{N_0} X_i
-\frac{\epsilon(1+\epsilon)n}{2}\right|\le n^{2/3}$.
\end{enumerate}
\end{lemma}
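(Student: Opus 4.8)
The plan is to handle the two parts independently; both are concentration statements with short proofs, and the first is the only one requiring any care with constants.

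For part (1), I would fix an interval $I\subseteq[N]$ with $|I|=kn$ and set $Z_I=\sum_{i\in I}X_i\sim\mathrm{Bin}(kn,p)$, which has mean $\mu:=knp=k(1-\epsilon)$. The event to control is $\{Z_I\ge k\}$; writing $k=(1+a)\mu$ forces $a=\frac{\epsilon}{1-\epsilon}$, which lies in $(0,1)$ once $\epsilon<1/2$, so the Chernoff upper-tail bound from Section~\ref{ss-prelim} applies and gives
$$
\Pr[Z_I\ge k]<\exp\left(-\frac{a^2\mu}{3}\right)=\exp\left(-\frac{\epsilon^2 k}{3(1-\epsilon)}\right)\le\exp\left(-\frac{\epsilon^2 k}{3}\right)=n^{-7/3},
$$
using $k=\frac{7}{\epsilon^2}\ln n$ in the last step. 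Since the number of intervals of length $kn$ in $[N]$ is less than $N<n^2$, a union bound over all of them bounds the probability that some interval is bad by $n^2\cdot n^{-7/3}=n^{-1/3}=o(1)$, which is exactly the claim.

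For part (2), note that $W:=\sum_{i=1}^{N_0}X_i\sim\mathrm{Bin}(N_0,p)$ has expectation $N_0p=\frac{\epsilon n^2}{2}\cdot\frac{1+\epsilon}{n}=\frac{\epsilon(1+\epsilon)n}{2}$, precisely the centering constant in the statement, and variance $N_0p(1-p)\le\frac{\epsilon(1+\epsilon)n}{2}=\Theta(n)$. Chebyshev's inequality then gives
$$
\Pr\left[\,\left|W-\tfrac{\epsilon(1+\epsilon)n}{2}\right|\ge n^{2/3}\right]\le\frac{N_0p(1-p)}{(n^{2/3})^2}\le\frac{\epsilon(1+\epsilon)n/2}{n^{4/3}}=O\!\left(n^{-1/3}\right)=o(1).
$$
(Alternatively a Chernoff bound works: a deviation of $n^{2/3}$ from a mean of order $n$ is a relative deviation of order $n^{-1/3}$, so the failure probability is $\exp(-\Theta(n^{1/3}))$.)

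I do not foresee a genuine obstacle; the only point to get right is the constant in the choice of $k$. The per-interval failure probability in part (1) decays like $n^{-c\epsilon^2 k}$, so one must take $\epsilon^2 k$ to be a sufficiently large multiple of $\ln n$ to survive the union bound over $\Theta(n^2)$ intervals, and the prescribed $k=\frac{7}{\epsilon^2}\ln n$ yields exponent $\tfrac73>2$ with room to spare (any constant $>6$ would do). Part (2) is entirely routine, with essentially any polynomially small tail estimate sufficing.
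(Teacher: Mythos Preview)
Your proposal is correct and follows essentially the same approach as the paper: Chernoff on $\mathrm{Bin}(kn,p)$ plus a union bound over the at most $n^2$ intervals for part~(1), and Chebyshev on $\mathrm{Bin}(N_0,p)$ for part~(2). Your arithmetic in part~(1) is in fact a touch sharper---you obtain the per-interval bound $n^{-7/3}$ directly, whereas the paper keeps a factor $(1-\epsilon)$ in the exponent and thus leans on the ``$\epsilon$ small enough'' hypothesis to make the union bound go through---but the method is identical.
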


\begin{proof} {1)} For a given interval $I$ of length $kn$ in $[N]$, the
sum $\sum_{i\in I} X_i$ is distributed binomially with parameters
$kn$ and $p$. Applying the  Chernoff bound to the upper tail of $B(kn,p)$, and
then the union bound, we see that the probability of the existence
of an interval violating the assertion of the lemma is at most
$$
(N-k+1)Pr[B(kn,p)\ge k]< n^2\cdot
e^{-\frac{\epsilon^2}{3}(1-\epsilon)k} <n^2\cdot
e^{-\frac{\epsilon^2(1-\epsilon)}{3}\,\frac{7}{\epsilon^2}\ln n}=
o(1)\,,
$$
for small enough $\epsilon>0$.

\noindent{2)} The sum $\sum_{i=1}^{N_0}X_i$ is distributed
binomially with parameters $N_0$ and $p$. Hence, its expectation is
$N_0p=\frac{\epsilon n^2p}{2}=\frac{\epsilon(1+\epsilon)n}{2}$, and
its standard deviation is of order $\sqrt{n}$. Applying the Chebyshev
inequality, we get the required estimate.
\end{proof}

\medskip

Now we are ready to formulate and to prove the main result of this section.

\begin{theo}\label{th1}
Let $\epsilon>0$ be a small enough constant. Let $G\sim G(n,p)$.
\begin{enumerate}
\item Let $p=\frac{1-\epsilon}{n}$. Then {\bf whp} all connected
components of $G$ are of size at most $\frac{7}{\epsilon^2}\ln n$.
\item Let $p=\frac{1+\epsilon}{n}$. Then {\bf whp} $G$ contains a path on
at least $\frac{\epsilon^2 n}{5}$ vertices.
\end{enumerate}
\end{theo}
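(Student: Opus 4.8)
The plan is to run the DFS algorithm on $G\sim G(n,p)$ with $\pi$ the identity order of $[n]$, exposing $G$ through the i.i.d.\ Bernoulli$(p)$ sequence $\bar X=(X_i)_{i=1}^N$ exactly as in the paragraph preceding Lemma~\ref{le3-1}, and to read off both parts from $\bar X$ together with the bookkeeping linking $|S_t|,|U_t|,|T_t|$ to the partial sums $\sum_{i\le t}X_i$. I use repeatedly: each pair is queried at most once; while $T\neq\emptyset$ every positive answer moves one vertex from $T$ to $U$, so after $t$ queries the number of discovered vertices equals $\sum_{i\le t}X_i$ plus the number of components touched; and when a vertex is popped into $S$ it has just been queried, negatively, against every vertex then in $T$, hence against every vertex still in $T$ at any later time, so at any moment the number of negative answers so far is at least $|S|\cdot|T|$.

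For part~(1), condition on the event of Lemma~\ref{le3-1}(1) and suppose some component $C$ has $|C|>k:=\tfrac{7}{\epsilon^2}\ln n$ vertices. Its DFS epoch is a block of consecutive queries with exactly $|C|-1\ge k$ positive answers (the root of $C$ is pushed for free, every other vertex of $C$ by a positive answer, and every positive answer inside the epoch pushes a vertex of $C$). Cutting the block at its $k$-th positive answer leaves a sub-block with exactly $k$ ones in which the active vertex is always one of the at most $k$ already-revealed vertices of $C$, each active for at most $n-1$ queries; hence this sub-block has length at most $k(n-1)+1\le kn$. Padding it to an interval of $[N]$ of length exactly $kn$ (possible as $N=\binom n2\gg kn$) gives an interval of length $kn$ with at least $k$ ones, contradicting Lemma~\ref{le3-1}(1). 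So {\bf whp} every component of $G$ has at most $k$ vertices.

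For part~(2), condition on the event of Lemma~\ref{le3-1}(2) and inspect the DFS state after $N_0=\tfrac{\epsilon n^2}{2}$ queries; write $X=\sum_{i=1}^{N_0}X_i$, so $m:=\tfrac{\epsilon(1+\epsilon)n}{2}-n^{2/3}\le X\le M:=\tfrac{\epsilon(1+\epsilon)n}{2}+n^{2/3}$. First, $T_{N_0}\neq\emptyset$: otherwise all $n$ vertices would be discovered, needing $n-c$ positive answers ($c$ = number of components of $G$), so $c\ge n-X\ge n-M>(1-2\epsilon)n$; but a routine estimate --- a positive constant fraction of the vertices of $G(n,(1+\epsilon)/n)$ lie in edges --- gives {\bf whp} $c<(1-2\epsilon)n$ for small $\epsilon$, a contradiction. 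Given $T_{N_0}\neq\emptyset$ we have $|S_{N_0}|+|U_{N_0}|\ge X\ge m$. Suppose now $|U_{N_0}|<\tfrac{\epsilon^2n}{5}$. The first $N_0$ queries give $N_0-X$ negative answers, this is at least $|S_{N_0}|\,|T_{N_0}|$, and $|T_{N_0}|=n-|S_{N_0}|-|U_{N_0}|>n-|S_{N_0}|-\tfrac{\epsilon^2n}{5}$, so $|S_{N_0}|\bigl(n-|S_{N_0}|-\tfrac{\epsilon^2n}{5}\bigr)<N_0-m$. The left side is a downward parabola in $|S_{N_0}|$ with maximum $\sim n^2/4\gg N_0-m$, so this forces $|S_{N_0}|$ either below its smaller root --- which a short computation bounds by $\tfrac{\epsilon n}{2}+\tfrac{\epsilon^2n}{4}$, itself $<m-\tfrac{\epsilon^2n}{5}$ for large $n$ --- or above its larger root, which exceeds $(1-\epsilon)n$ for small $\epsilon$. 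In the first case $|U_{N_0}|\ge m-|S_{N_0}|\ge\tfrac{\epsilon^2n}{5}$, contradicting the assumption. In the second, $|S_{N_0}\cup U_{N_0}|-X>(1-\epsilon)n-M>(1-2\epsilon)n$; since this is the number of components touched, it contradicts the {\bf whp} bound $c<(1-2\epsilon)n$. Hence {\bf whp} $|U_{N_0}|\ge\tfrac{\epsilon^2n}{5}$, and as $U$ always spans a path, {\bf whp} $G$ has a path on at least $\tfrac{\epsilon^2n}{5}$ vertices.

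The step needing the most care is the quadratic estimate above: the constraints $|S_{N_0}|\,|T_{N_0}|\le N_0-X$, $|S_{N_0}|+|U_{N_0}|\ge m$ and $|S_{N_0}|+|U_{N_0}|+|T_{N_0}|=n$ are essentially tight at scale $\tfrac{\epsilon n}{2}$ (a single near-spanning component devoured into $S$ would sit right on the boundary), so it is precisely the $(1+\epsilon)$ factor --- the $\Theta(\epsilon^2 n)$ gap between $m$ and $\tfrac{\epsilon n}{2}$ --- that supplies the slack needed both to make $|U_{N_0}|$ large and to kill the two degenerate cases ($T_{N_0}=\emptyset$ and $|S_{N_0}|$ near $n$). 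Everything else --- the Chernoff and Chebyshev bounds, the partial-sum identities for $|S_t|,|U_t|,|T_t|$, and the concentration of the number of isolated vertices behind the component bound --- is routine.
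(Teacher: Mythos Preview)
Your proof is correct. Part~(1) is essentially the paper's argument, phrased in terms of ``active vertices'' rather than the set $\Delta S\cup U$, but with the same content. Part~(2), however, is organized differently from the paper in one key respect.

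The paper keeps the entire argument internal to the DFS/random-bit framework of Lemma~\ref{le3-1}. It first proves $|S_{N_0}|<n/3$ by examining the first moment $t\le N_0$ at which $|S_t|=n/3$: then $|U_t|\le 1+\sum_{i\le t}X_i\le 1+\sum_{i\le N_0}X_i<n/3$ by Lemma~\ref{le3-1}(2), hence $|T_t|\ge n/3$, so the algorithm has already made $|S_t||T_t|\ge n^2/9>N_0$ queries by time $t\le N_0$ --- a contradiction. This single bound $|S_{N_0}|<n/3$ does double duty: it makes $T_{N_0}\ne\emptyset$ immediate, and it confines $|S_{N_0}|$ to the increasing branch of the parabola $s\mapsto s(n-s-\epsilon^2n/5)$, so the lower bound on $|S_{N_0}|$ can be substituted directly into $N_0\ge |S_{N_0}||T_{N_0}|$ with no two-root case split.

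You instead import an external fact about $G(n,(1+\epsilon)/n)$ --- that {\bf whp} the number of components is below $(1-2\epsilon)n$ --- and use it twice: once to force $T_{N_0}\ne\emptyset$, and once to eliminate the large-root branch of the quadratic via the identity $|S\cup U|-X=(\text{components touched})$. This is valid, but it means your proof is not self-contained in the way the paper's is: you need a separate probabilistic estimate (concentration of the number of isolated vertices) on top of Lemma~\ref{le3-1}(2). The paper's ``first time $|S|=n/3$'' device avoids this extra input and short-circuits the case analysis; on the other hand, your explicit quadratic treatment and the component-count identity make transparent why the degenerate large-$|S|$ scenario cannot occur, which the paper handles implicitly through the $n/3$ bound.
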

In both cases, we run the DFS algorithm on $G\sim G(n,p)$, and
assume that the sequence $\bar{X}=(X_i)_{i=1}^N$ of random
variables, defining the random graph $G\sim G(n,p)$ and guiding the
DFS algorithm, satisfies the corresponding part of Lemma \ref{le3-1}.

\medskip

\begin{proof} 1) Assume to the contrary that $G$ contains a connected
component $C$ with more than $k=\frac{7}{\epsilon^2}\ln n$ vertices.
Let us look at the epoch of the DFS when $C$ was created. Consider
the moment inside this epoch when the algorithm has found the
$(k+1)$-st vertex of $C$ and is about to move it to $U$. Denote
$\Delta S=S\cap C$ at that moment. Then $|\Delta S\cup U|=k$, and
thus the algorithm got exactly $k$ positive  answers to its queries
to random variables $X_i$ during the epoch, with each positive
answer being responsible for revealing a new vertex of $C$, after
the first vertex of $C$ was put into $U$ in the beginning of the
epoch. At that moment during the epoch only pairs of edges touching
$\Delta S\cup U$ have been queried, and the number of such pairs is
therefore at most $\binom{k}{2}+k(n-k)<kn$. It thus follows that the
sequence $\bar{X}$ contains an interval of length at most $kn$ with
at least $k$ 1's inside --- a contradiction to Property 1 of Lemma
\ref{le3-1}.

2) Assume that the sequence $\bar{X}$ satisfies Property 2 of Lemma
\ref{le3-1}. We claim that after the first $N_0=\frac{\epsilon
n^2}{2}$ queries of the DFS algorithm, the set $U$ contains at least
$\frac{\epsilon^2 n}{5}$ vertices (with the contents of $U$ forming
a path of desired length at that moment). Observe first that
$|S|<\frac{n}{3}$ at time $N_0$. Indeed, if $|S|\ge\frac{n}{3}$,
then let us look at a moment $t\le N_0$ where $|S|=\frac{n}{3}$ (such a
moment surely exists as vertices flow to $S$ one by one). At that
moment $|U|\le 1+\sum_{i=1}^{t}X_i<\frac{n}{3}$ by Property 2 of
Lemma \ref{le3-1}. Then $|T|=n-|S|-|U|\ge \frac{n}{3}$, and the
algorithm has examined all $|S|\cdot|T|\ge \frac{n^2}{9}>N_0$ pairs
between $S$ and $T$ (and found them to be non-edges) --- a
contradiction. Let us return to time $N_0$. If $|S|<\frac{n}{3}$ and
$|U|<\frac{\epsilon^2 n}{5}$ then, we have $T\ne\emptyset$. This
means in particular that the algorithm is still revealing the
connected components of $G$, and each positive answer it got
resulted in moving a vertex from $T$ to $U$ (some of these vertices
may have already migrated further from $U$ to $S$). By Property 2 of
Lemma \ref{le3-1} the number of positive answers at that point is at
least $\frac{\epsilon(1+\epsilon)n}{2}-n^{2/3}$. Hence we have
$|S\cup U|\ge \frac{\epsilon(1+\epsilon)n}{2}-n^{2/3}$. If
$|U|\le\frac{\epsilon^2n}{5}$, then $|S|\ge \frac{\epsilon
n}{2}+\frac{3\epsilon^2n}{10}-n^{2/3}$. All $|S||T|\ge
|S|\left(n-|S|-\frac{\epsilon^2n}{5}\right)$ pairs between $S$ and
$T$ have been probed by the algorithm (and answered in the
negative). We thus get:
\begin{eqnarray*}
\frac{\epsilon n^2}{2}&=&N_0\ge
|S|\left(n-|S|-\frac{\epsilon^2n}{5}\right)\ge \left(\frac{\epsilon
n}{2}+\frac{3\epsilon^2n}{10}-n^{2/3}\right) \left(n-\frac{\epsilon
n}{2}-\frac{\epsilon^2n}{2}+n^{2/3}\right)\\
&=&\frac{\epsilon n^2}{2}+\frac{\epsilon^2n^2}{20}-O(\epsilon^3)n^2
> \frac{\epsilon n^2}{2}
\end{eqnarray*}
(we used the assumption $|S|<\frac{n}{3}$ in the second inequality above), and this is obviously a
contradiction, completing the proof.
\end{proof}

\medskip

Let us discuss briefly the obtained result and its proof. First, given the probable existence of a long path in $G(n,p)$, that of a long cycle is just one short step further. Indeed, we can use sprinkling as follows. Let $p=\frac{1+\epsilon}{n}$ for small $\epsilon>0$. Write $1-p=(1-p_1)(1-p_2)$ with $p_2=\frac{\epsilon}{2n}$; thus, most of the probability $p$ goes into $p_1\ge \frac{1+\epsilon/2}{n}$. Let now $G\sim G(n,p)$, $G_1\sim G(n,p_1)$, $G_2\sim G(n,p_2)$, we can represent $G=G_1\cup G_2$. By Theorem \ref{th1}, $G_1$ {\bf whp} contains a linearly long path $P$. Now, the edges of $G_2$ can be used to close most of $P$ into a cycle -- there is {\bf whp} an edge of $G_2$ between the first and the last $n^{2/3}$ (say) vertices of $P$.

The dependencies on $\epsilon$ in both parts of Theorem \ref{th1} are of the correct order of
magnitude -- for $p=\frac{1-\epsilon}{n}$ a largest connected
component of $G(n,p)$ is known to be {\bf whp} of size
$\Theta(\epsilon^{-2})\log n$
while for $p=\frac{1+\epsilon}{n}$ a longest cycle
of $G(n,p)$ is {\bf whp} of length $\Theta(\epsilon^2)n$
(see, e.g., Chapter 6 of \cite{Bol-book}).

Observe that using a Chernoff-type bound for the
tales of the binomial random variable instead of the Chebyshev
inequality would allow us to claim in the second part of Lemma
\ref{le3-1} that the sum $\sum_{i=1}^{N_0} X_i$ is close to
$\frac{\epsilon(1+\epsilon)n}{2}$ with probability exponentially
close to 1. This would show in turn, employing the argument of
Theorem \ref{th1}, that $G(n,p)$ with $p=\frac{1+\epsilon}{n}$
contains a path of length linear in $n$ with exponentially high
probability, namely, with probability $1-\exp\{-c(\epsilon)n\}$.

 As we have  mentioned in Section \ref{ss-DFS}, the DFS algorithm is
applicable equally well to directed graphs. Hence essentially the
same argument as above, with obvious minor changes, can be applied
to the model $D(n,p)$ of random digraphs. It then yields the following theorem:
\begin{theo}\label{thm2}
Let $p=\frac{1+\epsilon}{n}$, for $\epsilon>0$ constant. Then the
random digraph $D(n,p)$ has {\bf whp} a directed path and a directed
cycle of length $\Theta(\epsilon^2)n$.
\end{theo}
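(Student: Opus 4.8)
The plan is to repeat, almost verbatim, the ``DFS on the fly'' argument used for the second part of Theorem~\ref{th1}, now feeding the directed DFS algorithm with a sequence $\bar X=(X_i)_{i=1}^{2N}$ of i.i.d.\ $\mathrm{Bernoulli}(p)$ random variables indexed by the $n(n-1)=2N$ ordered pairs of $[n]$: the $i$-th query (``does the current top-of-stack vertex $v$ have the out-edge $(v,u)$ to the candidate $u\in T$?'') is answered positively iff $X_i=1$, and the resulting graph is distributed as $D(n,p)$. The directed analogues of \textbf{(D1)}--\textbf{(D3)} hold: one vertex moves per round; $U$ always spans a \emph{directed} path (each vertex enters $U$ as an out-neighbour of the then-last vertex of $U$); and at every stage there is no directed edge from the current $S$ to the current $T$. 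The last point is the only place requiring care: when a vertex $s$ is popped from $U$ into $S$, all its out-queries into the then-current $T$ have returned negative, and since $T$ only shrinks with time, any $t$ still in $T$ at a later stage already lay in $T$ when $s$ was popped; hence all $|S|\cdot|T|$ ordered pairs $(s,t)$ with $s$ in the current $S$ and $t$ in the current $T$ have been queried and refuted. In particular, after $t$ queries with $T\neq\emptyset$ one still has $|S\cup U|\ge\sum_{i=1}^t X_i$ and $|U|\le 1+\sum_{i=1}^t X_i$, exactly as in the undirected case.

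With these invariants in hand, fix $N_0=\frac{\epsilon n^2}{2}$ (note $N_0<2N$ for small $\epsilon$). Since $\sum_{i=1}^{N_0}X_i\sim B(N_0,p)$ has mean $N_0p=\frac{\epsilon(1+\epsilon)n}{2}$ and standard deviation of order $\sqrt n$, Chebyshev's inequality gives, exactly as in part~2 of Lemma~\ref{le3-1}, that \textbf{whp} $\bigl|\sum_{i=1}^{N_0}X_i-\frac{\epsilon(1+\epsilon)n}{2}\bigr|\le n^{2/3}$. Conditioning on this event and running the directed DFS for $N_0$ queries, one first argues $|S|<\frac n3$ at time $N_0$: otherwise at the moment $t\le N_0$ when $|S|=\frac n3$ one has $|U|<\frac n3$, hence $|T|\ge\frac n3$ and the algorithm has already examined at least $|S|\cdot|T|\ge\frac{n^2}{9}>N_0$ ordered pairs --- a contradiction. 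Then, if moreover $|U|<\frac{\epsilon^2 n}{5}$ at time $N_0$, we have $T\neq\emptyset$, so $|S\cup U|\ge\frac{\epsilon(1+\epsilon)n}{2}-n^{2/3}$, whence $|S|\ge\frac{\epsilon n}{2}+\frac{3\epsilon^2 n}{10}-n^{2/3}$, and the inequality $N_0\ge|S|\bigl(n-|S|-\frac{\epsilon^2 n}{5}\bigr)$ fails by precisely the same computation as in the proof of Theorem~\ref{th1} (again using $|S|<\frac n3$). Hence \textbf{whp} $|U|\ge\frac{\epsilon^2 n}{5}$ at time $N_0$, and by \textbf{(D3)} the set $U$ then spans a directed path on at least $\frac{\epsilon^2 n}{5}$ vertices.

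To upgrade the long directed path to a long directed cycle I would use sprinkling, as in the paragraph following Theorem~\ref{th1}. Write $1-p=(1-p_1)(1-p_2)$ with $p_2=\frac{\epsilon}{2n}$, so that $p_1\ge\frac{1+\epsilon/2}{n}$, and realise $D(n,p)=D(n,p_1)\cup D(n,p_2)$ (the multiple-exposure identity holds for ordered pairs verbatim). By the above, $D(n,p_1)$ \textbf{whp} contains a directed path $P$ on $\Theta(\epsilon^2)n$ vertices; let $A$ be its first $n^{2/3}$ vertices and $B$ its last $n^{2/3}$ vertices. The number of directed $B\to A$ edges present in $D(n,p_2)$ is binomially distributed with mean $\sim n^{4/3}\cdot\frac{\epsilon}{2n}=\Theta(n^{1/3})\to\infty$, so \textbf{whp} at least one such edge exists, and it closes all but at most $2n^{2/3}$ vertices of $P$ into a directed cycle of length $\Theta(\epsilon^2)n$.

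The argument is essentially a transcription of Section~\ref{s-longpaths}; the only genuine point requiring attention is the directed form of invariant \textbf{(D2)} together with the bookkeeping of \emph{ordered} queried pairs --- once that is settled, the probabilistic estimates and the final counting inequality are literally those already proved, and the cycle follows by the standard sprinkling step. (One could instead try to invoke Proposition~\ref{DFS3}, but at $p=\Theta(1/n)$ the digraph is far too sparse to meet its hypothesis, so sprinkling is the natural route to the cycle.)
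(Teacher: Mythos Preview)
Your proposal is correct and follows exactly the approach the paper indicates: the paper does not give a separate proof of Theorem~\ref{thm2} but simply asserts that ``essentially the same argument as above, with obvious minor changes'' carries over to $D(n,p)$, and you have spelled out precisely those changes --- the directed form of invariant \textbf{(D2)} with ordered-pair bookkeeping, the identical probabilistic estimate at time $N_0=\frac{\epsilon n^2}{2}$, and the sprinkling of a $B\to A$ edge to close the cycle. Your remark that Proposition~\ref{DFS3} is not directly applicable here (its hypothesis is too strong at $p=\Theta(1/n)$) is also to the point.
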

This recovers the result of Karp \cite{Kar90}.

\subsection{Nearly spanning paths}

Consider now the regime $p=\frac{C}{n}$, where $C$ is a (large) constant. Our goal is to prove that {\bf whp} in $G(n,p)$, the length of a longest path approaches $n$ as $C$ tends to infinity. This too is a classical result due to Ajtai, Koml\'os and Szemer\'edi \cite{AKS81}, and independently due to Fernandez de la Vega \cite{Vega79}. It is fairly amusing to see how easily it can be derived using the DFS-based tools we developed in Section \ref{ss-DFS}.

\begin{theo}\label{t21}
For every $\epsilon>0$  there exists $C=C(\epsilon)>0$ such that the following is true.
Let $G\sim G(n,p)$, where $p=\frac{C}{n}$. Then {\bf whp} $G$ contains a path of length at least $(1-\epsilon)n$.
\end{theo}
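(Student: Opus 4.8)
The plan is to reduce the theorem to Proposition \ref{DFS2}, which already packages the combinatorial heart of the matter. Fix $\epsilon>0$ and put $k=\lfloor \epsilon n/2\rfloor$, so that a path of length $n-2k+1\ge (1-\epsilon)n$ is exactly what we want. By Proposition \ref{DFS2}, it therefore suffices to show that, for a suitable choice of $C=C(\epsilon)$, the random graph $G\sim G(n,C/n)$ \textbf{whp} contains an edge between every pair of disjoint vertex subsets of size $k$.

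To establish this ``edge between any two large sets'' property I would run a straightforward union-bound argument. For a fixed pair of disjoint $k$-sets $S,T$ there are $k^2$ potential edges between them, each present independently with probability $p=C/n$, so the probability that $G$ spans no edge between $S$ and $T$ is $(1-p)^{k^2}\le e^{-pk^2}=e^{-C\epsilon^2 n/4}$ (writing $k\ge \epsilon n/2-1$ and absorbing lower-order terms into the constant). The number of ordered pairs of disjoint $k$-sets is at most $\binom{n}{k}^2\le (en/k)^{2k}\le (2e/\epsilon)^{\epsilon n}$ by (\ref{bin1}). Hence the probability that some pair of disjoint $k$-subsets spans no edge of $G$ is at most
$$
\binom{n}{k}^2(1-p)^{k^2}\le \exp\!\left(\epsilon n\ln\frac{2e}{\epsilon}-\frac{C\epsilon^2 n}{4}\right),
$$
which is $o(1)$ — in fact exponentially small in $n$ — as soon as $C>\frac{4}{\epsilon}\ln\frac{2e}{\epsilon}$. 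This is precisely how $C(\epsilon)$ is chosen; it is of order $\epsilon^{-1}\log(1/\epsilon)$.

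Combining the two steps completes the proof: \textbf{whp} $G$ has an edge between every two disjoint $k$-sets, and then Proposition \ref{DFS2} produces a path of length $n-2k+1\ge(1-\epsilon)n$. As for the main obstacle, there is no deep one here — the substantive work lives in Proposition \ref{DFS2}, and what remains is the routine first-moment estimate whose only delicate point is balancing the entropy factor $\binom{n}{k}^2$ against the non-edge probability $(1-p)^{k^2}$, which is exactly what forces $C$ to be taken large. One could alternatively run the DFS algorithm directly on the Bernoulli sequence defining $G(n,p)$ and track the sizes of $S$, $T$ and $U$ as in the proof of Theorem \ref{th1}, but invoking Proposition \ref{DFS2} as a black box is the cleaner route.
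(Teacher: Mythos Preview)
Your proof is correct and follows essentially the same route as the paper: reduce to Proposition~\ref{DFS2} with $k=\lfloor \epsilon n/2\rfloor$, then use a union bound over pairs of disjoint $k$-sets, balancing the entropy $\binom{n}{k}^2$ against the non-edge probability $(1-p)^{k^2}$ to force $C$ of order $\epsilon^{-1}\ln(1/\epsilon)$. The only cosmetic differences are in bookkeeping---the paper writes the final estimate as $\left[(en/k)^2 e^{-Ck/n}\right]^k$ and takes $C=\frac{5\ln(1/\epsilon)}{\epsilon}$, whereas you extract the exponent directly and arrive at $C>\frac{4}{\epsilon}\ln\frac{2e}{\epsilon}$; both are equivalent.
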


\begin{proof}
Clearly we can assume $\epsilon>0$ to be small enough. Let $k=\lfloor\frac{\epsilon n}{2}\rfloor$. By Proposition \ref{DFS2} it suffices to show that $G\sim G(n,p)$ contains {\bf whp} an edge between every pair of disjoint subsets of size $k$ of $V(G)$. For a given pair of disjoint sets $S,T$ of size $|S|=|T|=k$, the probability that $G$ contains no edges between $S$ and $T$ is exactly $(1-p)^{k^2}$ (all $k^2$ pairs between $S$ and $T$ come out non-edges in $G$). Using the union bound, we obtain that the probability of the existence of a pair violating this requirement is at most
$$
\binom{n}{k}\binom{n-k}{k}(1-p)^{k^2}<\binom{n}{k}^2(1-p)^{k^2}<\left(\frac{en}{k}\right)^{2k}e^{-pk^2}
< \left[\left(\frac{en}{k}\right)^2\cdot e^{-\frac{Ck}{n}}\right]^k\ .
$$
Recalling the value of $k$ and taking $C=\frac{5\ln(1/\epsilon)}{\epsilon}$ guarantees that the above estimate vanishes (in fact exponentially fast) in $n$, thus establishing the claim.
\end{proof}

\medskip

As before, getting a nearly spanning cycle {\bf whp} can be easily done through sprinkling.

\bigskip

A similar statement holds for the probability space $D(n,p)$ of random directed graphs, with an essentially identical proof.

\section{The appearance of Hamilton cycles in random graphs}\label{s-Ham}

The main aim of this section is to establish the Hamiltonicity threshold in the probability space $G(n,p)$, this  is the minimum value of the edge probability $p(n)$, for which a random graph $G$ drawn from $G(n,p)$ is {\bf whp} Hamiltonian. By doing so we will prove a classical result of Koml\'os and Szemer\'edi \cite{KS83} and independently of Bollob\'as \cite{B84}.

Let us start by providing some intuition on where this threshold is expected to be located. It is well known that the threshold probability for connectivity in $G(n,p)$ is $p=\frac{\ln n}{n}$. More explicitly, one can prove (and we leave this as an exercise) that for any function $\omega(n)$ tending to infinity arbitrarily slowly with $n$, if $p=\frac{\ln n-\omega(n)}{n}$, then {\bf whp} $G\sim G(n,p)$ is not connected, whereas for $p=\frac{\ln n+\omega(n)}{n}$ {\bf whp} $G\sim G(n,p)$ is connected. Perhaps more importantly, the main reason for the threshold for connectivity to be around $\ln n/n$ is that precisely at this value of probability the last isolated vertex in $G(n,p)$ typically ceases to exist. Of course, the graph cannot be connected while having isolated vertices, and this is the easy part of the connectivity threshold statement; the hard(er) part is to prove that if $p(n)$ is such that $\delta(G)\ge 1$ {\bf whp}, then $G$ is {\bf whp} connected.

If so, we can suspect that the threshold for Hamiltonicity of $G(n,p)$ coincides with that of non-existence of vertices of degree at most one, the latter being an obvious necessary condition for Hamiltonicity. This is exactly what was proven in \cite{B84,KS83}. Let us therefore set our goal by stating first a fairly accessible result about the threshold for $\delta(G)\ge 2$, both in $G(n,p)$ and in $G(n,m)$.

\begin{prop}\label{mindeg2}
Let $\omega(n)$ be any function tending to infinity arbitrarily slowly with $n$. Then:
\begin{itemize}
\item in the probability space $G(n,p)$,
  \begin{enumerate}
   \item if $p(n)=\frac{\ln n+\ln\ln n-\omega(n)}{n}$, then $G\sim G(n,p)$ {\bf whp} satisfies $\delta(G)\le 1$;
   \item if $p(n)=\frac{\ln n+\ln\ln n+\omega(n)}{n}$, then $G\sim G(n,p)$ {\bf whp} satisfies $\delta(G)\ge 2$;
  \end{enumerate}
\item in the probability space $G(n,m)$,
  \begin{enumerate}
   \item if $m(n)=\frac{(\ln n+\ln\ln n-\omega(n))n}{2}$, then $G\sim G(n,m)$ {\bf whp} satisfies $\delta(G)\le 1$;
   \item if $m(n)=\frac{(\ln n+\ln\ln n+\omega(n))n}{2}$, then $G\sim G(n,m)$ {\bf whp} satisfies $\delta(G)\ge 2$.
  \end{enumerate}
\end{itemize}
\end{prop}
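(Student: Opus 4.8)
The plan is to prove the two $G(n,p)$ assertions directly, by the first and second moment methods respectively, and then to carry the conclusions over to $G(n,m)$ via the standard coupling between the two models. Throughout, let $X=X(G)$ denote the number of vertices of degree at most one, so that $\{\delta(G)\le 1\}=\{X\ge 1\}$ and $\{\delta(G)\ge 2\}=\{X=0\}$. For a fixed vertex $v$ one has $\Pr[\deg(v)\le 1]=(1-p)^{n-1}+(n-1)p(1-p)^{n-2}=:\mu$, hence $\mathbb{E}[X]=n\mu$. Before computing anything I would record the harmless reduction that it suffices to treat $\omega$ with $\omega(n)\to\infty$ and $\omega(n)\le\ln\ln n$: replacing $\omega$ by $\min(\omega,\ln\ln n)$ shifts $p$ (resp.\ $m$) in the direction which, since $\{\delta\le 1\}$ is a decreasing and $\{\delta\ge 2\}$ an increasing graph property and since $G(n,p)\subseteq G(n,p')$ (resp.\ $G(n,m)\subseteq G(n,m')$) can be coupled for $p\le p'$, only makes the claimed conclusion harder. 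With $np=\ln n+\ln\ln n\mp\omega$ one then has $np\to\infty$, $np=(1+o(1))\ln n$ and $np^2=o(1)$, so $(1-p)^{n-1}=(1+o(1))e^{-np}$, and a one-line computation gives $\mathbb{E}[X]=(1+o(1))\,n^2p\,e^{-np}=(1+o(1))\,e^{\pm\omega}$ (with the sign opposite to that in $np$).

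For the second bullet, $p=\frac{\ln n+\ln\ln n+\omega}{n}$, this reads $\mathbb{E}[X]=(1+o(1))e^{-\omega}\to 0$, so Markov's inequality gives $\Pr[X\ge 1]\le \mathbb{E}[X]\to 0$, i.e.\ $\delta(G)\ge 2$ {\bf whp}. For the first bullet, $p=\frac{\ln n+\ln\ln n-\omega}{n}$, we have $\mathbb{E}[X]=(1+o(1))e^{\omega}\to\infty$, and I would run the second moment method: it suffices to show $\mathbb{E}[X^2]=(1+o(1))(\mathbb{E}[X])^2$, for then $\mathrm{Var}[X]=o((\mathbb{E}[X])^2)$ and Chebyshev's inequality yields $\Pr[X=0]\le\mathrm{Var}[X]/(\mathbb{E}[X])^2\to 0$. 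Writing $\mathbb{E}[X^2]=\mathbb{E}[X]+\sum_{u\ne v}\Pr[\deg(u)\le 1,\ \deg(v)\le 1]$, the crux is the pairwise term. Conditioning on whether $uv$ is an edge gives $\Pr[\deg(u)\le 1,\deg(v)\le 1]=p(1-p)^{2(n-2)}+(1-p)\big((1-p)^{n-2}+(n-2)p(1-p)^{n-3}\big)^2$; one checks that $(1-p)^{n-2}+(n-2)p(1-p)^{n-3}=(1+o(1))\mu$ and that $p(1-p)^{2(n-2)}$ is negligible next to $\mu^2$, so this probability equals $(1+o(1))\mu^2$ uniformly over $u\ne v$. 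Hence $\mathbb{E}[X^2]=n\mu+n(n-1)(1+o(1))\mu^2=\mathbb{E}[X]+(1+o(1))(\mathbb{E}[X])^2=(1+o(1))(\mathbb{E}[X])^2$ since $\mathbb{E}[X]\to\infty$. I expect this correlation estimate — checking that the events $\{\deg(u)\le1\}$ and $\{\deg(v)\le1\}$ are asymptotically uncorrelated, which amounts to comparing several nearly-equal products $(1-p)^{n-O(1)}$ after conditioning on the status of the pair $uv$ — to be the only genuinely technical point; everything else is bookkeeping with $np\approx\ln n$.

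Finally, for $G(n,m)$ I would transfer from $G(n,p)$ using monotonicity. Recall that for a monotone property $P$, with $M\sim\mathrm{Bin}(N,p)$ one has $\Pr[G(n,p)\in P]=\mathbb{E}[\Pr[G(n,M)\in P]]$, and since $m'\mapsto\Pr[G(n,m')\in P]$ is monotone this gives $\Pr[G(n,m)\in P]\ge \Pr[G(n,p)\in P]-\Pr[M\text{ lands on the far side of }m]$. Given $m=\frac{(\ln n+\ln\ln n\pm\omega)n}{2}$ (again reduced to $\omega\to\infty$, $\omega\le\ln\ln n$), I would pick $p$ with $Np=m-\Theta(n\omega)$, arranged so that $np$ still has the form $\ln n+\ln\ln n\pm\omega^\ast$ with $\omega^\ast\to\infty$ (for instance $np=\ln n+\ln\ln n-2\omega$ in the first bullet and $np=\ln n+\ln\ln n+\omega/2$ in the second). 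Then $|m-Np|=\Theta(n\omega)$ dwarfs $\sqrt{Np}=\Theta(\sqrt{n\ln n})$, so by Chebyshev (or Chernoff) $\Pr[M\text{ on the far side of }m]=o(1)$; applying the already-proven $G(n,p)$ results at this $p$ then yields $\delta(G)\le 1$ {\bf whp} in the first case and $\delta(G)\ge 2$ {\bf whp} in the second, completing the proof.
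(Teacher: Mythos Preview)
Your $G(n,p)$ argument is correct and is exactly what the paper intends: first moment (Markov) for $\delta\ge 2$, second moment (Chebyshev) for $\delta\le 1$. The asymptotics $\mathbb{E}[X]=(1+o(1))e^{\pm\omega}$ and the pairwise correlation estimate are both fine.

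For $G(n,m)$ the paper's hint is to redo the first and second moment computations directly in $G(n,m)$; your transfer via monotonicity is a perfectly legitimate alternative, but there is a sign slip in your execution of the first bullet. The property $P=\{\delta\le 1\}$ is \emph{decreasing}, so the inequality you quote reads
\[
\Pr[G(n,m)\in P]\ \ge\ \Pr[G(n,p)\in P]-\Pr[M<m],
\]
and to make the error term $o(1)$ you need $Np>m$ by about $\Theta(n\omega)$. Your suggested choice $np=\ln n+\ln\ln n-2\omega$ gives $Np\approx m-\tfrac{n\omega}{2}<m$, so in fact $\Pr[M<m]\to 1$ and the bound is vacuous. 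The fix is immediate: take, say, $np=\ln n+\ln\ln n-\omega/2$, so that $Np\approx m+\tfrac{n\omega}{4}$, the $G(n,p)$ result still applies (since $\omega/2\to\infty$), and $\Pr[M<m]=o(1)$ by Chebyshev. With this correction your transfer goes through for both bullets.
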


\begin{proof} Straightforward application of the first (for proving $\delta(G)\ge 2$) and the second (for proving $\delta(G)\le 1$) moment methods in both probability spaces; left as an exercise.
\end{proof}

\medskip

Hence our goal will be to prove that for $p(n)=\frac{\ln n+\ln\ln n+\omega(n)}{n}$ and for $m(n)=\frac{(\ln n+\ln\ln n+\omega(n))n}{2}$ the random graphs $G(n,p)$ and $G(n,m)$ respectively are {\bf whp} Hamiltonian.

We will actually prove a stronger, and a much more delicate, result about the hitting time of Hamiltonicity in random graph processes. Let us first define this notion formally. Let $\sigma: E(K_n)\rightarrow [N]$ be a permutation of the edges of the complete graph $K_n$ on $n$ vertices, we can write $\sigma=(e_1,\ldots,e_N)$, where $N=\binom{n}{2}$. A {\em graph process} $\tilde{G}=\tilde{G}(\sigma)$ is a nested sequence $\tilde{G}=(G_i)_{i=0}^N$, where the graph $G_i$ has $[n]$ as its vertex set and $\{e_1,\ldots, e_i\}$, the prefix of $\sigma$ of length $i$, as its edge set. The sequence $(G_i)$ thus starts with the empty graph on $n$ vertices, finishes with the complete graph on $n$ vertices, and its $i$-th element $G_i$ has exactly $i$ edges; moreover, it is nested, as for $i\ge 1$ the graph $G_{i}$ is obtained from its predecessor $G_{i-1}$ by adding the $i$-th edge $e_{i}$ of $\sigma$. We can view $\tilde{G}(\sigma)$ as a graph process (as the name indicates suggestively) or as an evolutionary process, unraveling from the empty graph to the complete graph, as guided by $\sigma$.

Now, we introduce the element of randomness in the above definition. Suppose the permutation $\sigma$ is drawn uniformly at random from the set of all $N!$ permutations of the edges of $K_n$.  Then the corresponding process $\tilde{G}(\sigma)$ is called a {\em random graph process}. We can describe it in the following equivalent way: start by setting $G_0$ to be the empty graph on $n$ vertices, and for each $1\le i\le N$, obtain $G_i$ by choosing an edge $e_i$ of $K_n$ missing in $G_{i-1}$ uniformly at random and adding it to $G_{i-1}$. This very nice and natural probability space models a random evolutionary process in graphs; here too we proceed from the empty graph to the complete graph, but in a random fashion.

Random graph processes are so important not just because they model evolution very nicely; in fact, they embed the probability spaces $G(n,m)$ for various $m$; due to standard connections between $G(n,m)$ and $G(n,p)$ one can also claim they ``contain" $G(n,p)$ as well. Observe that running a random process $\tilde{G}$ and stopping it (or taking a {\em snapshot}) at time $m$ produces the probability distribution $G(n,m)$. Indeed, every graph $G$ with vertex set $[n]$ and exactly $m$ edges is the $m$-th element of the same number of graph processes, namely, of $m!(N-m)!$ of them. Thus, understanding random graph processes usually leads to immediate consequences for $G(n,m)$, and then for $G(n,p)$, and Hamiltonicity is not exceptional in this sense.

Let ${\cal P}$ be a property of graphs on $n$ vertices; assume that $P$ is monotone increasing (i.e., adding edges preserves it), and that  the complete graph $K_n$ possesses ${\cal P}$ (one can think of ${\cal P}$ as being the property of Hamiltonicity). Then, given a permutation $\sigma: E(K_n)\rightarrow [N]$ and the corresponding graph process $\tilde{G}(\sigma)$, we can define the first moment $i$
when the $i$-th element $G_i$ of $\tilde{G}$ has ${\cal P}$. This is the so called {\em hitting time} of ${\cal P}$, denoted by $\tau_{{\cal P}}(\tilde{G}(\sigma))$:
$$
\tau_{{\cal P}}(\tilde{G}(\sigma))=\min\{i\ge 0: G_i\mbox{ has } {\cal P}\}\,.
$$
Of course, due to the monotonicity of ${\cal P}$ from this point till the end of the process the graphs $G_i$ all have ${\cal P}$. When $\tilde{G}$ is a random graph process, the hitting time $\tau_{{\cal P}}(\tilde{G})$ becomes a random variable, and one can study its typical behavior. A related task is to compare two hitting times, and to try to bundle them, deterministically or probabilistically.

We now state the main result of this section, due to Ajtai, Koml\'os and Szemeredi \cite{AKS85}, and to Bollob\'as \cite{B84}.

\begin{theo}\label{th-Ham-hit}
Let $\tilde{G}$ be a random graph process on $n$ vertices. Denote by $\tau_2(\tilde{G})$ and $\tau_{{\cal H}}(\tilde{G})$ the hitting times of the properties of having minimum degree at least 2, and of Hamiltonicity, respectively. Then {\bf whp}:
$$
\tau_2(\tilde{G})=\tau_{{\cal H}}(\tilde{G})\,.
$$
\end{theo}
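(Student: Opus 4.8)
My plan would be to prove the equality by proving the two inequalities. The bound $\tau_2(\tilde G)\le\tau_{{\cal H}}(\tilde G)$ is immediate and holds for every edge-permutation $\sigma$, since a graph with a vertex of degree at most $1$ is not Hamiltonian; so the whole task is the reverse inequality, and it suffices to show that \textbf{whp} the snapshot $G_{\tau_2}$ of the process at its hitting time for minimum degree $2$ is already Hamiltonian. Throughout I would use the equivalences between the process, $G(n,m)$ and $G(n,p)$ (snapshots and multiple exposure) together with Proposition~\ref{mindeg2}, which \textbf{whp} confines $\tau_2$ to a short window around $\frac n2(\ln n+\ln\ln n)$. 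The approach is a two-round exposure. Fix a parameter $\omega=\omega(n)\to\infty$ (moderately fast, say $\omega=\ln n/\ln\ln n$) and set $m_-=\frac n2(\ln n+\ln\ln n-\omega)$. The first round reveals $\Gamma:=G_{m_-}$; since by Proposition~\ref{mindeg2} \textbf{whp} $m_-<\tau_2$, the graph $G_{\tau_2}$ then consists of $\Gamma$ together with a \emph{reservoir} $R$ of the next $\tau_2-m_-=\Theta(n\omega)$ edges, which conditionally on $\Gamma$ behaves like a near-uniform random set of that many further pairs; and, by the definition of $\tau_2$, every vertex has degree at least $2$ in $\Gamma\cup R=G_{\tau_2}$.

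The combinatorial core, and the step I expect to be the main obstacle, is a structural lemma asserting that \textbf{whp} one can, using only a negligible part of $R$, repair $\Gamma$ to a subgraph $\Gamma'\subseteq G_{\tau_2}$ which is a \emph{connected, spanning $(\lfloor n/4\rfloor,2)$-expander} and in addition contains an edge between every two disjoint vertex sets of size $\epsilon n$, for some $\epsilon=\epsilon(n)\to 0$. The ``edge between large sets'' property, and expansion of sets $U$ with $\frac{n}{3\ln n}\le|U|\le\frac n4$, follow from union bounds over bad configurations in $G(n,p)$ with $p\approx\frac{\ln n}{n}$, using \eqref{bin1}--\eqref{bin3} and the Chernoff bounds: if $|N_\Gamma(U)|<2|U|$ for such a $U$, then $U$ has no edges to the complement of $U\cup N_\Gamma(U)$, a set of linear size, so $|U|\cdot\Omega(n)$ prescribed pairs are forbidden, which is far too costly. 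For smaller sets $U$ a violation would force $U$ to consist mostly of vertices of bounded degree lying in a small common set; this is ruled out because \textbf{whp} $G$ has only polylogarithmically many vertices of any fixed degree and any two low-degree vertices are far apart, while also \textbf{whp} no small vertex set spans many more edges than a tree. Finally, the repair handles the low-degree vertices of $\Gamma$ (a set of size $n^{o(1)}$ \textbf{whp}) by, roughly, suppressing every vertex of degree exactly $2$ in $G_{\tau_2}$ --- replacing it and its two edges by a virtual edge, legitimate since a Hamilton cycle must traverse both its edges --- and attaching the remaining low-degree vertices to the core along a handful of reservoir edges, after which one checks that expansion and connectivity survive. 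Getting factor-$2$ expansion for all sets up to $\Theta(n)$ right at the threshold $p\approx\frac{\ln n}{n}$ is delicate precisely because of these few vertices of degree $2$ and $3$, and isolating exactly which configurations to forbid is where the real work lies.

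Granting $\Gamma'$, the rest is the booster machinery of Section~\ref{ss-boosters}. By Proposition~\ref{DFS2}, a graph with an edge between any two disjoint $\epsilon n$-sets has a path missing at most $2\epsilon n-1=o(n)$ vertices, so $\Gamma'$ is within $I:=O(\epsilon n)=o(n)$ booster additions of Hamiltonicity. I would then iterate: as long as the current graph $\Gamma_i$ --- that is, $\Gamma'$ plus the boosters inserted so far --- is connected, non-Hamiltonian and a $(\lfloor n/4\rfloor,2)$-expander (all three preserved when edges are added), Corollary~\ref{cor-Posa3} supplies it with at least $\frac12(\lfloor n/4\rfloor+1)^2=\Omega(n^2)$ boosters, i.e.\ a positive fraction of all pairs. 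Split the still-unused part of $R$ into $I$ fresh chunks of size $\Theta(n\omega/I)=\Omega(\ln n)$ (this is why $\omega$ was taken comfortably larger than a constant); the probability that a given chunk contains no booster of the corresponding $\Gamma_i$ is $(1-\Omega(1))^{\Omega(\ln n)}=o(1/n)$, so by a union bound over the at most $n$ steps \textbf{whp} every step finds one. After at most $I$ steps the graph $\Gamma'\cup\{\text{boosters}\}\subseteq G_{\tau_2}$ is Hamiltonian, giving $\tau_{{\cal H}}(\tilde G)\le\tau_2(\tilde G)$ \textbf{whp}; the corresponding statements for $G(n,m)$ and $G(n,p)$ then follow by taking the snapshot at the relevant $m(n)$, respectively exposing at probability $p(n)$, and invoking Proposition~\ref{mindeg2}. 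Apart from the structural lemma, the one genuinely fiddly point is the conditioning in the two-round exposure --- $\tau_2$ is a random stopping time and $R$ is not exactly uniform --- which I would handle in the standard way, by exposing the low-degree vertices and their incident pairs last, each such pair still being present with probability $\approx p$.
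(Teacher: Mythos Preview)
Your approach is in the spirit of classical sprinkling arguments, but it differs fundamentally from the paper's proof and runs into a genuine difficulty at the hitting time that the paper's route is specifically designed to avoid.

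The paper does \emph{not} use two-round exposure or a reservoir. Instead, it shows (Lemma~\ref{le4-1}) that $G:=G_{\tau_2}$ \textbf{whp} satisfies a list of structural properties (P1)--(P6), and from these constructs (Lemma~\ref{sparse-exp}) a \emph{sparse} spanning subgraph $\Gamma_0\subseteq G$, obtained by letting every vertex keep only $d_0=\delta_0\ln n$ incident edges, which is an $(n/4,2)$-expander with at most $d_0n$ edges. The key step (Lemma~\ref{boosters}) is a union bound over \emph{all} $(n/4,2)$-expanders $\Gamma$ on $[n]$ with at most $(d_0+1)n$ edges: each non-Hamiltonian such $\Gamma$ has $\Omega(n^2)$ boosters by Corollary~\ref{cor-Posa3}, and the probability that $G_{\tau_2}$ contains the edges of $\Gamma$ but none of its boosters is small enough to beat the count $\binom{N}{\le(d_0+1)n}$ of candidate $\Gamma$'s. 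One then iterates entirely \emph{within} $G_{\tau_2}$: starting from $\Gamma_0$, repeatedly adjoin a booster already present in $G$; each intermediate $\Gamma_i$ is still a sparse $(n/4,2)$-expander, so the lemma guarantees a next booster in $G$, and after at most $n$ steps one reaches a Hamiltonian subgraph of $G_{\tau_2}$. No extra randomness is spent, so there is no stopping-time conditioning at all --- this is precisely what the sparseness of $\Gamma_0$ buys.

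In your plan, by contrast, the repair and the sprinkling compete for the same reservoir $R$, and this is where the gap lies. To make $\Gamma'$ an $(n/4,2)$-expander you need every vertex to have degree at least $2$ in $\Gamma'$; but by definition the last vertex of degree $\le 1$ acquires its second edge only at time $\tau_2$, so the repair necessarily reaches to $e_{\tau_2}$ and hence $\Gamma'$ depends on \emph{all} of $R$. The booster set of $\Gamma'$ is then a function of $R$, and the ``remaining'' reservoir edges you intend to sprinkle with are no longer independent of that target set --- so the clean per-chunk calculation $(1-\Omega(1))^{\Omega(\ln n)}$ is not justified as stated. Your remark that one handles this ``in the standard way, by exposing low-degree vertices and their incident pairs last'' is the right instinct for a threshold result in $G(n,m)$ or $G(n,p)$, but at the exact hitting time the identity of the last low-degree vertex, the value of $\tau_2$, and the contents of $R$ are genuinely entangled. (Separately, the degree-$2$ suppression does not reduce to an unconstrained Hamiltonicity problem: a Hamilton cycle in the suppressed graph lifts back only if it traverses \emph{every} virtual edge, which is an extra constraint, not a simplification; the paper instead keeps the degree-$2$ vertices and uses (P3) --- that SMALL vertices are pairwise far apart --- to push the expansion through.) None of this is unfixable, but the ``boosters already inside $G_{\tau_2}$'' idea is exactly what makes the paper's argument short and clean for the hitting-time statement.
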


\medskip

In words, for a typical graph process, Hamiltonicity arrives {\em exactly} at the very moment the last vertex of degree less than two disappears. Of course, it cannot arrive earlier deterministically, so the main point of the above theorem is to prove that typically is does not arrive later either.

As we indicated above, random graph process results are usually more powerful than those for concrete random graph models. Here too we are able to derive the results for $G(n,m)$ and $G(n,p)$ easily from the above theorem.

\begin{coro}\label{Ham-Gnm}
Let $m(n)=\frac{(\ln n+\ln\ln n+\omega(n))n}{2}$. Then a random graph $G\sim G(n,m)$ is {\bf whp}
 Hamiltonian.
 \end{coro}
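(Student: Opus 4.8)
The plan is to derive Corollary~\ref{Ham-Gnm} from Theorem~\ref{th-Ham-hit} by exploiting the standard embedding of the probability spaces $G(n,m)$ inside the random graph process, as discussed in the paragraph preceding Proposition~\ref{mindeg2}. First I would recall that if $\tilde{G}=(G_i)_{i=0}^N$ is a random graph process, then taking its snapshot $G_m$ at time $m$ has exactly the distribution $G(n,m)$; hence it suffices to show that for $m=m(n)=\frac{(\ln n+\ln\ln n+\omega(n))n}{2}$ the graph $G_m$ is {\bf whp} Hamiltonian. By Theorem~\ref{th-Ham-hit}, {\bf whp} $\tau_{\cal H}(\tilde{G})=\tau_2(\tilde{G})$, so it is enough to prove that {\bf whp} $\tau_2(\tilde{G})\le m$; then $G_m$ already has minimum degree at least $2$, and by monotonicity it is Hamiltonian.

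The step $\tau_2(\tilde{G})\le m$ {\bf whp} is precisely a restatement of the easy (second-moment) direction of Proposition~\ref{mindeg2}: for $m=m(n)=\frac{(\ln n+\ln\ln n+\omega(n))n}{2}$ the snapshot $G_m\sim G(n,m)$ {\bf whp} satisfies $\delta(G_m)\ge 2$. Since $\tau_2(\tilde{G})$ is the first time the process reaches minimum degree $2$ and the process is nested (adding edges cannot decrease any degree), $\delta(G_m)\ge 2$ is equivalent to $\tau_2(\tilde{G})\le m$. Thus the corollary follows by combining Proposition~\ref{mindeg2} (the $G(n,m)$, part 2 statement) with Theorem~\ref{th-Ham-hit}: {\bf whp} both $\tau_2(\tilde{G})\le m$ and $\tau_2(\tilde{G})=\tau_{\cal H}(\tilde{G})$ hold, hence {\bf whp} $\tau_{\cal H}(\tilde{G})\le m$, i.e. $G_m$ is Hamiltonian.

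There is essentially no real obstacle here once Theorem~\ref{th-Ham-hit} is granted; the only things to be careful about are bookkeeping points rather than genuine difficulties. One should note that $m(n)$ is a legitimate integer value for the number of edges (one may replace it by $\lceil m(n)\rceil$ without affecting anything, since the $\omega(n)$ slack absorbs the rounding), and that $m(n)\le N=\binom{n}{2}$ for large $n$ so that the snapshot at time $m$ is well defined. One also uses that the intersection of two events each holding {\bf whp} holds {\bf whp}. I would present the argument in two or three sentences: invoke the snapshot identity $G_m\overset{d}{=}G(n,m)$, apply Theorem~\ref{th-Ham-hit} together with Proposition~\ref{mindeg2} to conclude $\tau_{\cal H}(\tilde{G})\le m$ {\bf whp}, and finish by monotonicity of Hamiltonicity along the process.
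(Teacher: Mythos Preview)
Your argument is correct and matches the paper's proof essentially verbatim: realize $G(n,m)$ as the snapshot $G_m$ of the random graph process, use Proposition~\ref{mindeg2} to get $\tau_2(\tilde{G})\le m$ \textbf{whp}, and combine with Theorem~\ref{th-Ham-hit} to conclude $\tau_{\cal H}(\tilde{G})\le m$ \textbf{whp}. One cosmetic slip: the direction of Proposition~\ref{mindeg2} you invoke (showing $\delta(G_m)\ge 2$) is the \emph{first}-moment direction, not the second-moment one.
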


 \begin{proof}
Generate a random graph $G$ distributed according to $G(n,m)$ by running a random graph process $\tilde{G}$ and stopping it at time $m$. By Proposition \ref{mindeg2} we know that $\tau_2(\tilde{G})\le m$.  Theorem \ref{th-Ham-hit} implies that typically $\tau_2(\tilde{G})=\tau_{{\cal H}}(\tilde{G})$, and thus the graph of the process has become Hamiltonian not later than $m$. Hence $G$ is {\bf whp} Hamiltonian as well.
\end{proof}

\medskip

\begin{coro}\label{Ham-Gnp}
Let $p(n)=\frac{\ln n+\ln\ln n+\omega(n)}{n}$. Then a random graph $G\sim G(n,m)$ is {\bf whp}
 Hamiltonian.
 \end{coro}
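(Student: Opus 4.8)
The plan is to deduce the statement (which should read $G\sim G(n,p)$, not $G\sim G(n,m)$) from Corollary~\ref{Ham-Gnm} by the standard passage from $G(n,m)$ to $G(n,p)$, using only the monotonicity of Hamiltonicity and the concentration of the number of edges. All the genuine work has already been done in Theorem~\ref{th-Ham-hit}; what remains is bookkeeping.

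First, since Hamiltonicity is a monotone increasing graph property and $G(n,p)\supseteq G(n,p')$ in the natural coupling whenever $p\ge p'$, we may assume without loss of generality that $\omega(n)$ tends to infinity as slowly as we wish, say $\omega(n)=o(\ln n)$: replacing $\omega$ by $\min\{\omega(n),\ln\ln\ln n\}$ only decreases $p$, hence only makes the conclusion harder. Next, condition $G\sim G(n,p)$ on the value of $M:=|E(G)|$. It is standard that, given $M=m$, the graph $G$ is distributed exactly as $G(n,m)$; and since $q(m):=\Pr[G(n,m)\text{ is Hamiltonian}]$ is non-decreasing in $m$, we obtain for every threshold $m_0$
$$
\Pr[G\sim G(n,p)\text{ is Hamiltonian}]=\sum_m \Pr[M=m]\,q(m)\ \ge\ \Pr[M\ge m_0]\cdot q(m_0)\,.
$$
It therefore suffices to choose $m_0=m_0(n)$ so that simultaneously $q(m_0)\to 1$ and $\Pr[M\ge m_0]\to 1$.

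For the first requirement, by Corollary~\ref{Ham-Gnm} it is enough that $m_0$ be of the form $\frac{(\ln n+\ln\ln n+\omega'(n))n}{2}$ with $\omega'(n)\to\infty$; accordingly set $\omega'(n):=\omega(n)-\sqrt{\omega(n)}$ (still $\to\infty$) and $m_0:=\big\lceil\frac{(\ln n+\ln\ln n+\omega'(n))n}{2}\big\rceil$. For the second requirement, note $M\sim \mathrm{Bin}(N,p)$ with $\mathbb{E}[M]=Np=\binom{n}{2}\frac{\ln n+\ln\ln n+\omega(n)}{n}=\frac{n-1}{2}\big(\ln n+\ln\ln n+\omega(n)\big)$, so that
$$
Np-m_0=\frac{n}{2}\sqrt{\omega(n)}-O(\ln n)\ \ge\ \frac{n}{4}\sqrt{\omega(n)}
$$
for all large $n$. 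The standard deviation of $M$ is $\sqrt{Np(1-p)}=O(\sqrt{n\ln n})$, which is $o\!\big(\tfrac{n}{4}\sqrt{\omega(n)}\big)$, so Chebyshev's inequality (or the Chernoff bound of Section~\ref{s-tools}) gives
$$
\Pr[M<m_0]\ \le\ \Pr\!\Big[\,|M-Np|\ge \tfrac{n}{4}\sqrt{\omega(n)}\,\Big]\ =\ O\!\Big(\frac{\ln n}{n\,\omega(n)}\Big)\ =\ o(1)\,.
$$
Plugging these two facts into the displayed inequality completes the argument.

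The step I expect to require the most care is precisely this calibration: $m_0$ must be pushed far enough below $Np$ that the (sub-$n$) fluctuations of $M$ cannot reach it, yet must still exceed the Hamiltonicity hitting threshold $\frac{(\ln n+\ln\ln n)n}{2}$ by a growing margin so that Corollary~\ref{Ham-Gnm} applies — the choice $\omega'=\omega-\sqrt{\omega}$ achieves both, but any gap intermediate in magnitude between $\sqrt{n\ln n}$ and $n\,\omega(n)$ would do. One could alternatively bypass the short conditioning computation above by invoking directly the general equivalence results between $G(n,p)$ and $G(n,m)$ mentioned in Section~\ref{ss-prelim}, which transfer monotone whp-statements from the $G(n,m)$ model to the $G(n,p)$ model once the relevant parameters are matched.
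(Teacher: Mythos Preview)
Your proof is correct and follows essentially the same route as the paper: condition $G(n,p)$ on its edge count, use Chebyshev to show this count is concentrated near $Np$, and invoke Corollary~\ref{Ham-Gnm} for the resulting $G(n,m)$. The only cosmetic difference is that you exploit the monotonicity of Hamiltonicity to reduce to a single one-sided threshold $m_0$, whereas the paper uses a two-sided interval $I=[Np-n\omega(n)/3,\,Np+n\omega(n)/3]$ and checks Corollary~\ref{Ham-Gnm} for every $m\in I$; both work, and your preliminary reduction to $\omega(n)=o(\ln n)$ is harmless though not strictly needed.
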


\begin{proof}
Observe that generating a random graph $G\sim G(n,p)$ and conditioning on its number of edges being exactly equal to $m$ produces the distribution $G(n,m)$.
Let $\omega_1(n)=\omega(n)/3$. Denote $I=[Np-n\omega_1(n),Np+n\omega_1(n)]$. Observe that for every $m\in I$, the random graph $G\sim G(n,m)$ is {\bf whp} Hamiltonian by Corollary \ref{Ham-Gnm}. Also, the number of edges in $G(n,p)$ is distributed binomially with parameters $N$ and $p$ and has thus standard deviation less than $\sqrt{Np}\ll n\omega_1(n)$. Applying Chebyshev we derive that {\bf whp} $|E(G)|\in I$. Hence
\begin{gather*}
Pr[G\sim G(n,p)\mbox{ is not Hamiltonian}]=\sum_{m=0}^N Pr[|E(G)|=m]\cdot Pr[G\mbox{ is not Hamiltonian}|\, |E(G)=m]\\
\le Pr[|E(G)|\not\in I] +\sum_{m\in I} Pr[|E(G)|=m]\,Pr[G\mbox{ is not Hamiltonian}|\, |E(G)=m]\\
= o(1) + \sum_{m\in I} Pr[|E(G)|=m]\,Pr[G\sim G(n,m)\mbox{ is not Hamiltonian}|=
o(1)\cdot Pr[ Bin(N,p)\in I]\\
=o(1)\,.
\end{gather*}
\end{proof}

\medskip

Now we start proving Theorem \ref{th-Ham-hit}. The proof is somewhat technical, so before diving into its details, we outline its main idea briefly. Recall that our goal is to prove that for a typical random process $\tilde{G}$, we have $\tau_2(\tilde{G})=\tau_{{\cal H}}(\tilde{G})$. In order to prove this, we will take a very close look at the snapshot $G_{\tau_2}$ of $\tilde{G}$, aiming to prove that this graph is {\bf whp} Hamiltonian. By definition, the minimum degree of $G_{\tau_2}$ is exactly two, and it is thus quite reasonable to expect that this graph is typically a $(k,2)$-expander for $k=\Theta(n)$. This is true indeed, however such expansion by itself does not quite guarantee Hamiltonicity. As indicated in Section \ref{ss-boosters}, expanders form a very convenient backbone for augmenting a graph to a Hamiltonian one --- according to Corollary \ref{cor-Posa3} every connected non-Hamiltonian $(k,2)$-expander has $\Omega(k^2)$ boosters. Observe though that since we aim to prove a hitting time result, we cannot allow ourselves to sprinkle few random edges on top of our expander --- a Hamilton cycle should appear at the very moment the minimum degree in the random graph process becomes two. We will circumvent this difficulty in the following way: we will argue that the snapshop $G_{\tau_2}$ typically is not only a good expander by itself, but also contains a subgraph
$\Gamma_0$ which is about as good an expander as $G_{\tau_2}$ is, but contains only a small positive proportion of its edges. Having obtained such $\Gamma_0$, we will start looking for boosters relative to $\Gamma_0$, but already contained in our graph $G_{\tau_2}$ -- thus avoiding the need for sprinkling. We will argue that $G_{\tau_2}$ is typically such that it is contains a booster with respect to every sparse expander in it. If this is the case, then we will be able to start with $\Gamma_0$ and to update it sequentially by adding a booster after a booster  (at most $n$ boosters will need to be added by definition), until we will finally reach Hamiltonocity -- all within $G_{\tau_2}$; observe crucially that at each step of this augmentation procedure the updated backbone $\Gamma_i$, obtained by adjoining to $\Gamma_0$ the previously added boosters, has at most $n$ more edges than $\Gamma_0$ and is thus still a sparse subgraph of $G_{\tau_2}$; of course the required expansion is inherited from an iteration to iteration. Then our claim about $G_{\tau_2}$ typically containing a booster with respect to every sparse expander within is applicable, and we can push the process through. This is quite a peculiar proof idea -- it appears that the random graph is helping itself to become Hamiltonian!

Let us get to work. As outlined before, we run a random graph process $\tilde{G}$ and take a snapshot at the hitting time $\tau_2=\tau_2(\tilde{G})$. Denote
\begin{eqnarray*}
m_1&=&\frac{n\ln n}{2}\,\\
m_2&=&n\ln n\,.
\end{eqnarray*}
Observe that by Proposition \ref{mindeg2} we have that {\bf whp} $m_1\le \tau_2\le m_2$. Let
$$
d_0=\lfloor \delta_0 \ln n\rfloor\,
$$
where $\delta_0>0$ is a sufficiently small constant to be chosen later, and denote, for a graph $G$ on $n$ vertices,
$$
SMALL(G)=\{v\in V(G): d(v)<d_0\}\,.
$$
Observe that for $G\sim G(n,m)$ with $m\ge m_1$, the expected vertex degree is asymptotically equal to $\ln n$. Thus falling into $SMALL(G)$ is a rather rare event, and we can expect the vertices of $SMALL(G)$ to be few and far apart in the graph. In addition, such $G$ should typically have a very nice edge distribution, with no small and dense vertex subsets, and many edges crossing between any two large disjoint subsets. This is formalized in the following lemma.

\begin{lemma}\label{le4-1}
Let $\tilde{G}=(G_i)_{i=0}^N$ be a random graph process on $n$ vertices. Denote $G=G_{\tau_2}$, where $\tau_2=\tau_2(\tilde{G})$ is the hitting time for having minimum degree two in $\tilde{G}$. Then {\bf whp} $G$ has the following properties:
\begin{itemize}
\item[{\bf (P1)}] $\Delta(G)\le 10\ln n$; $\delta(G)\ge 2$;
\item[{\bf (P2)}] $|SMALL(G)|\le n^{0.3}$;
\item[{\bf (P3)}] $G$ does not contain a non-empty path of length at most 4  such that both of its (possibly identical) endpoints lie in $SMALL(G)$;
\item[{\bf (P4)}] every vertex subset $U\subset [n]$ of size $|U|\le \frac{n}{\ln^{1/2}n}$ spans at most $|U|\cdot\ln^{3/4}n$ edges in $G$;
\item[{\bf (P5)}] for every pair of disjoint vertex subsets $U,W$ of sizes  $|U|\le \frac{n}{\ln^{1/2}n}$, $|W|\le |U|\cdot\ln^{1/4}n$, the number of edges of $G$ crossing between $U$ and $W$ is at most $\frac{d_0|U|}{2}$;
\item[{\bf (P6)}] for every pair of disjoint vertex subsets $U,W$ of size $|U|=|W|=\left\lceil\frac{n}{\ln^{1/2}n}\right\rceil$, $G$ has at least $0.5n$ edges between $U$ and $W$.
\end{itemize}
\end{lemma}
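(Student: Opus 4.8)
The plan is to reduce the whole statement to fixed edge densities via the sandwich $m_1\le\tau_2\le m_2$, which holds {\bf whp} by Proposition~\ref{mindeg2}. A snapshot of a random graph process at a fixed time $m$ has the distribution $G(n,m)$, and on the event $m_1\le\tau_2\le m_2$ one has the nesting $G_{m_1}\subseteq G_{\tau_2}\subseteq G_{m_2}$. Hence, for any graph property that is monotone decreasing in the edge set, $\Pr[G_{\tau_2}\text{ fails it}]\le\Pr[\tau_2>m_2]+\Pr[G(n,m_2)\text{ fails it}]$, and dually with $m_1$ for monotone increasing properties. Now the bound $\Delta\le10\ln n$ in {\bf (P1)}, together with {\bf (P4)} and {\bf (P5)}, are monotone decreasing, so I would verify these for $G\sim G(n,m_2)$; the bound $|SMALL(G)|\le n^{0.3}$ in {\bf (P2)} and the edge-expansion estimate {\bf (P6)} are monotone increasing, so I would verify these for $G\sim G(n,m_1)$; the inequality $\delta(G)\ge2$ in {\bf (P1)} is automatic for $G_{\tau_2}$ by the definition of $\tau_2$; and {\bf (P3)}, which is not monotone, is treated separately. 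In each reduced statement one may as well pass to $G(n,p)$ with the matching density ($p_1\sim\frac{\ln n}{n}$, $p_2\sim\frac{2\ln n}{n}$), perturbing the constants slightly so that the relevant inclusion between $G(n,p)$ and $G(n,m)$ holds {\bf whp}.

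Each of {\bf (P1)}, {\bf (P2)}, {\bf (P4)}, {\bf (P5)}, {\bf (P6)} is then a routine first- or second-moment computation with the tools of Section~\ref{s-tools}. For {\bf (P1)} the expected degree at $p_2$ is $\sim2\ln n$, so the Chernoff upper tail gives $\Pr[d(v)>10\ln n]=n^{-\Omega(1)}$ with much room, and a union bound over the $n$ vertices finishes. For {\bf (P2)} the expected degree at $p_1$ is $\sim\ln n$, and an elementary binomial lower-tail estimate gives $\Pr[\mathrm{Bin}(n,p_1)<d_0]\le n^{-1+\delta_0\ln(e/\delta_0)+o(1)}$, which is at most $n^{-0.9}$ once $\delta_0$ is a small enough absolute constant; hence $\mathbb{E}\,|SMALL(G)|\le n^{0.1}$ and Markov gives $|SMALL(G)|\le n^{0.3}$ {\bf whp}. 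For {\bf (P4)}, {\bf (P5)}, {\bf (P6)} one fixes a candidate set, or pair of sets, of the prescribed size, bounds the probability of too many (resp.\ too few) edges inside/between them by a Chernoff-type tail --- using $\Pr[\mathrm{Bin}(N,p)\ge k]\le(eNp/k)^k$ from Section~\ref{s-tools} for the upper-tail cases --- and checks that this beats $\binom{n}{|U|}\binom{n}{|W|}\le(en/|U|)^{|U|}(en/|W|)^{|W|}$; the precise size constraints ($|U|\le n/\ln^{1/2}n$, $|W|\le|U|\ln^{1/4}n$, and the like) are exactly what makes the sum over all admissible sizes tend to $0$, and one may restrict to the range where the bad event is not vacuous ($|U|\ge2\ln^{3/4}n$ for {\bf (P4)}, $|W|\ge d_0/2$ for {\bf (P5)}). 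The only point of care is to use the sharp bound $\binom{n}{k}\le(en/k)^k$ rather than $n^k$ in {\bf (P4)}--{\bf (P6)}, since the relevant sets there have size $\Theta(n/\mathrm{polylog}\,n)$.

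The genuine obstacle is {\bf (P3)}, since it couples the decreasing object $SMALL$ with the increasing event ``a short path exists''. The key reduction: $\tau_2\ge m_1$ gives $SMALL(G_{\tau_2})\subseteq SMALL(G_{m_1})$, while $\tau_2\le m_2$ makes every length-$\le4$ path of $G_{\tau_2}$ a path of $G_{m_2}$; so it suffices to show that {\bf whp} $G_{m_2}$ has no path of length at most $4$ joining two (possibly equal) vertices of $SMALL(G_{m_1})$. To decouple the two parts I would use multiple exposure: write (after the usual perturbation of constants) $G_1\subseteq G_{m_1}$ and $G_{m_2}\subseteq G_2$, where $G_1\sim G(n,p_1)$ and $G_2=G_1\cup G'$ with $G'\sim G(n,q)$ independent of $G_1$ and $q=O(\ln n/n)$; then $L:=SMALL(G_1)\supseteq SMALL(G_{m_1})$ is a function of $G_1$ alone. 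To bound the expected number of length-$\le4$ paths in $G_2$ with both endpoints in $L$, fix a vertex sequence $(w_0,\dots,w_\ell)$ and split according to which path edges already lie in $G_1$: the edges outside $G_1$ each contribute a factor $q$ by independence of $G'$, while by Harris' inequality the increasing event ``the remaining path edges lie in $G_1$'' and the decreasing event ``$w_0,w_\ell\in L$'' satisfy $\Pr[\text{both}]\le\Pr[\text{edges in }G_1]\cdot\Pr[w_0,w_\ell\in L]$, and $\Pr[w_0,w_\ell\in L]\le\big(\Pr[\mathrm{Bin}(n,p_1)<d_0]\big)^2\le n^{-1.8}$ because the $G_1$-degrees of $w_0$ and $w_\ell$ are governed by essentially disjoint sets of pairs (again using $\delta_0$ small). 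Summing over the $\le n^{\ell+1}$ sequences and $\le 2^\ell$ splittings,
\[
\mathbb{E}[\#\text{bad paths}]\le\sum_{\ell=1}^{4}n^{\ell+1}\cdot2^{\ell}\cdot\big(O(\ln n/n)\big)^{\ell}\cdot n^{-1.8}=O(\ln^4 n)\cdot n^{-0.8}=o(1),
\]
with the coinciding-endpoints (short-cycle) case even easier, carrying only one factor $n^{-0.9}$; Markov's inequality then finishes. In short, everything is standard once one handles (i) the random stopping time $\tau_2$, via the sandwich-plus-monotonicity scheme, and (ii) the dependence between $SMALL$ and the edges of the forbidden path in {\bf (P3)}, via a two-round exposure together with Harris' inequality.
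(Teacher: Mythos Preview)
Your overall framework matches the paper's exactly: sandwich $m_1\le\tau_2\le m_2$ via Proposition~\ref{mindeg2}, then exploit monotonicity to push {\bf (P1)}, {\bf (P4)}, {\bf (P5)} to the upper snapshot and {\bf (P2)}, {\bf (P6)} to the lower one; the paper even leaves {\bf (P4)}--{\bf (P6)} to the reader, as you essentially do. Your passage from $G(n,m)$ to $G(n,p)$ is a cosmetic difference --- the paper computes directly in $G(n,m)$ with the estimates (\ref{bin1})--(\ref{bin3}), while you prefer the product measure; both are fine.

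The one place where your route genuinely diverges is {\bf (P3)}. The paper proceeds in two stages: first it shows, by direct conditioning in $G(n,m_1)$, that {\bf whp} there is no short path in $G_{m_1}$ joining two vertices of $SMALL(G_{m_1})$ (computing $\Pr[v_0,v_r\in SMALL\mid \mathcal{A}_P]$ explicitly after fixing the $r$ path edges); second, it argues that the edges added between times $m_1$ and $m_2$ are {\bf whp} harmless, because each such edge would have to land in the set of vertices at distance $\le 3$ from $SMALL(G_{m_1})$, which has size at most $n^{0.3}(10\ln n)^3$ by {\bf (P1)} and {\bf (P2)}. Your single-pass argument --- multiple exposure $G_2=G_1\cup G'$ and Harris to decouple the increasing event ``path edges present in $G_1$'' from the decreasing event ``endpoints in $SMALL(G_1)$'' --- is correct and handles both stages at once (note that the joint law of $(G(n,p_1),G(n,p_2))$ in the order-statistic coupling used for the sandwich agrees with the independent-sprinkling law, so your decomposition is legitimate). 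What you gain is uniformity and brevity; what the paper's approach gains is that it stays entirely within the toolkit of Section~\ref{s-tools} (no correlation inequality), at the cost of needing {\bf (P1)} and {\bf (P2)} as inputs to finish {\bf (P3)}. Your splitting into $2^\ell$ cases is in fact unnecessary --- Harris applies directly to ``path in $G_2$'' versus ``$w_0,w_\ell\in L$'', since the former is increasing and the latter decreasing in the full product space --- but it does no harm.
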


\begin{proof}
The proof is basically a fairly standard (though tedious) manipulation with binomial coefficients. We will thus prove several of the above items, leaving the proof of remaining ones to the reader.

\medskip

\noindent{\bf (P1)}: Observe that since {\bf whp} $\tau_2\le m_2$, it is enough to prove that in $G\sim G(n,m_2)$ there are {\bf whp} no vertices of degree at least $10\ln n$. For a given vertex $v\in [n]$, the probability that $v$ has degree at least $10\ln n$ in $G(n,m_2)$ is at most
$$
\binom{n-1}{10\ln n}\frac{\binom{N-10\ln n}{m_2-10\ln n}}{\binom{N}{m_2}}
\le \left(\frac{en}{10\ln n}\right)^{10\ln n}\, \left(\frac{m_2}{N}\right)^{10\ln n}\,,
$$
by the standard estimates on binomial coefficients stated in Section \ref{sss-asymp}. After cancellations we see that the above estimate is at most $(en/5(n-1))^{10\ln n}=o(1/n)$. Applying the union bound we obtain that typically at time $m_2$, and thus at $\tau_2\le m_2$ as well, there are no vertices of degree at least $10\ln n$. The bound on $\delta(G)$ is immediate from the definition of $\tau_2$.

\medskip

\noindent{\bf (P2)}: Notice that since adding edges can only decrease the size of $SMALL(G)$, it is enough to prove that typically already at time $m_1$ $|SMALL(G_{m_1})|\le n^{0.3}$. Let $G\sim G(n,m_1)$. If $|SMALL(G)|\ge n^{0.3}$, then $G$ contains a subset $V_0\subset V$, $|V_0|=k=\lceil n^{0.3}\rceil$ such that $e_G(V_0,V-V_0)\le d_0k$. The probability of this to happen in $G(n,m_1)$ is at most:
\begin{gather*}
\binom{n}{k}\sum_{i\le d_0k}\binom{k(n-k)}{i}\cdot\frac{\binom{N-k(n-k)}{m_1-i}}{\binom{N}{m_1}}
\le \binom{n}{k}\sum_{i\le d_0k}\binom{kn}{i}\cdot\frac{\binom{N-k(n-k)}{m_1-i}}{\binom{N-i}{m_1-i}}\cdot
\frac{\binom{N-i}{m_1-i}}{\binom{N}{m_1}}\\
\le \left(\frac{en}{k}\right)^k\sum_{i\le d_0k} \left(\frac{ekn}{i}\right)^i\cdot
e^{-\frac{(m_1-i)(k(n-k)-i)}{N-i}}\cdot\left(\frac{m_1}{N}\right)^i
\le  \left(\frac{en}{k}\right)^k\sum_{i\le d_0k} \left(\frac{ekm_1n}{iN}\right)^i\cdot e^{-\frac{0.9km_1n}{N}}\\
\le \left(\frac{en}{k}\right)^k \cdot (d_0k+1)\cdot\left(\frac{ekm_1n}{d_0kN}\right)^{d_0k}\cdot  e^{-\frac{0.9km_1n}{N}}\\
\le (d_0k+1)\left[3n^{0.7}\left(\frac{3\ln n}{d_0}\right)^{d_0}\cdot e^{-0.8\ln n}\right]^k=o(1)\,,
\end{gather*}
for $\delta_0$ small enough.

\medskip

\noindent{\bf (P3)}: Since {\bf whp} $m_1\le \tau_2\le m_2$, it is enough to prove the following statement:{\bf whp} every two (possibly identical) vertices of $SMALL(G_{m_1})$ are not connected by a path of length at most 4 in $G_{m_2}$.

Let us prove first that {\bf whp} there is no such path in $G_{m_1}\sim G(n,m_1)$. We start with the case where the endpoints of the path are distinct. Fix $1\le r\le 4$, a sequence $P$ of distinct vertices $v_0,\ldots,v_r$ in $[n]$ and denote by ${\cal A}_P$ the event $(v_i,v_{i+1})\in E(G_{m_1}$ for every $0\le i\le r-1$. Then
$$
Pr[{\cal A}_P]=\frac{\binom{N-r}{m_1-r}}{\binom{N}{m_1}}\le\left(\frac{m_1}{N}\right)^r=\left(\frac{\ln n}{n-1}\right)^r\,.
$$
If we now condition on ${\cal A}_P$, then the two edges $(v_0,v_1)$ and $(v_{r-1},v_r)$ are present in $G_{m_1}$. Thus in order for both $v_0,v_r$ to fall into $SMALL(G_{m_1})$, out of $2n-4$ potential edges between $\{v_0,v_r\}$ and the rest of the graph (the edges $(v_0,v_1),(v_{r-1},v_r)$ are excluded from the count), only at most $2d_0-2$ are present in $G_{m_1}$. Hence:
\begin{eqnarray*}
Pr[v_0,v_r\in SMALL(G_{m_1}) | {\cal A}_P] &\le& \sum_{i=0}^{2d_0-2}\binom{2n-4}{i}\cdot
\frac{\binom{N-r-2n+4}{m_1-r-i}}{\binom{N-r}{m_1-r}}\\
&\le& (2d_0-1)\binom{2n-4}{2d_0-2}\cdot \frac{\binom{N-r-2n+4}{m_1-r-2d_0+2}}{\binom{N-r}{m_1-r}}\\
&\le&  2d_0 \binom{2n-4}{2d_0-2}\cdot \frac{\binom{N-r-2n+4}{m_1-r-2d_0+2}}{\binom{N-r-2d_0+2}{m_1-r-2d_0+2}}\cdot
\frac {\binom{N-r-2d_0+2}{m_1-r-2d_0+2}}{\binom{N-r}{m_1-r}}\\
&\le& 2d_0\cdot \left(\frac{en}{d_0-1}\right)^{2d_0-2}\cdot e^{-\frac{(m_1-r-2d_0+2)(2n-2d_0-2)}{N-r-2d_0+2}}\cdot
\left(\frac{m_1-r}{N-r}\right)^{2d_0-2}\\
&\le& 2d_0\cdot \left(\frac{em_1n}{(d_0-1)N}\right)^{2d_0-2}\cdot e^{-\frac{1.9m_1n}{N}}\le n^{-1.8}\,,
\end{eqnarray*}
for $\delta_0$ small enough. Hence, applying the union bound over all such sequences of $r+1$ vertices, we conclude that the probability that there exists a path in $G_{m_1}$ of length at most 4, connecting two distinct vertices from $SMALL(G_{m_1})$ is at most $\sum_{r\le 4} n^{r+1}\cdot \left(\frac{\ln n}{n-1}\right)^r\cdot n^{-1.8}=o(1)$. The case where the endpoitns of the path are identical is treated similarly.

In light of the above, we can assume that after $m_1$ steps of the random graph process the current graph does not have a forbidden short path between the vertices of $SMALL$. Moreover, by {\bf (P2)} we can assume that $|SMALL(G_{m_1})|\le n^{0.3}$. Now, let us run the process between $m_1$ and $m_2$. In order for the $i$-th edge of the process, $m_1<i\le m_2$, to close a short path between the vertices of $SMALL(G_{m_1})$, it should fall inside a current set $U$ of vertices at distance at most 3 from $SMALL(G_{m_1})$. We have proven (property {\bf (P1)}) that in fact ${\bf whp}$ the maximum degree of $G_{m_2}$ as well is at most $10\ln n$. Hence, ${\bf whp}$ in this time interval, the set $U$ has size at most $|SMALL(G_{m_1}|\cdot (10\ln n)^3$, and thus the probability of the $i$-th edge of the process to fall inside $U$ is at most $\frac{\binom{|U|}{2}}{N-m_2}=o(n^{-1.3})$. Taking the union bound over all such $i$ in the interval $(m_1,m_2]$, we establish the desired property.

Properties {\bf (P4)}--{\bf (P6)} can be proven quite similarly (and are in fact simpler to prove), and we spare the reader from the (perhaps somewhat boring\ldots) task of reading their proofs.

\end{proof}

\medskip

The above stated properties {\bf (P1)}--{\bf (P6)} are sufficient to prove that $G_{\tau_2}$ is a very good expander by itself. Our goal is somewhat different though -- we aim to prove that $G_{\tau_2}$ contains a much sparser, but still fairly good expander. For this purpose, assume that a graph $G=(V,E)$ has properties {\bf (P1)}--{\bf (P6)}. Form a random subgraph $\Gamma_0$ of $G$ as follows. For every $v\in V-SMALL(G)$, choose a set $E(v)$ of $d_0$ edges of $G$ incident to $v$ uniformly at random; for every $v\in SMALL(G)$, define $E(v)$ to be the set of {\em all} edges of $G$ touching $v$. Finally, define $\Gamma_0$ to be the spanning subgraph of $G$, whose edge set is:
$$
E(\Gamma_0)=\bigcup_{v} E(v)\,.
$$
In words, in order to form $\Gamma_0$ we retain all edges touching the vertices of $SMALL(G)$, and sparsify randomly other edges.

\begin{lemma}\label{sparse-exp}
With high probability (over the choices of $E(v)$) the subgraph $\Gamma_0$ is a $(k,2)$-expander with at most $d_0n$ edges, where $k=\frac{n}{4}$.
\end{lemma}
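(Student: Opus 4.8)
The plan is to verify the two assertions separately: first that $\Gamma_0$ has at most $d_0 n$ edges (this is essentially immediate), and then the harder part, that it is a $(k,2)$-expander with $k = n/4$. For the edge count, observe that $|E(\Gamma_0)| \le \sum_v |E(v)|$. For $v \notin SMALL(G)$ we have $|E(v)| = d_0$, and for $v \in SMALL(G)$ we have $|E(v)| = d(v) \le \Delta(G) \le 10\ln n$ by (P1); since $|SMALL(G)| \le n^{0.3}$ by (P2), the total contribution of small vertices is $O(n^{0.3}\ln n) = o(n)$, so $|E(\Gamma_0)| \le d_0 n + o(n)$, which can be absorbed (or one adjusts $\delta_0$ slightly) to give at most $d_0 n$ as stated.

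The core of the argument is expansion: I want to show that {\bf whp} every $U \subset V$ with $|U| \le n/4$ satisfies $|N_{\Gamma_0}(U)| \ge 2|U|$. The natural split is by the size of $U$. For ``medium-to-large'' sets, say $|U|$ between roughly $n/\ln^{1/2}n$ and $n/4$, the expansion should already be forced deterministically by the edge-distribution properties (P4)--(P6) of the host graph $G$, independently of the random sparsification --- except that $\Gamma_0$ only retains $d_0$ edges per vertex. The honest way is: if $|N_{\Gamma_0}(U)| < 2|U|$, put $W = N_{\Gamma_0}(U)$ and note that all $\Gamma_0$-edges out of $U$ go into $U \cup W$, a set of size $< 3|U|$; then $U$ and $W$ (or $U \cup W$) form a pair to which (P5), or in the largest range (P6), applies, bounding the number of $G$-edges — hence also $\Gamma_0$-edges — between them; comparing this with the lower bound ``each of the many vertices of $U\setminus SMALL$ contributes $d_0$ edges of $\Gamma_0$, most of which must land in $U\cup W$'' yields a contradiction. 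This part uses only (P1)--(P6) and the construction of $\Gamma_0$, no probability.

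The genuinely probabilistic (and main) obstacle is the range of \emph{small} sets $U$, say $1 \le |U| \le n/\ln^{1/2}n$. Here $G$ itself expands well but a random choice of $d_0$ edges per vertex could in principle concentrate. Suppose for contradiction that some such $U$ has $|N_{\Gamma_0}(U)| < 2|U|$; set $W = N_{\Gamma_0}(U) \cup U$, so $|W| < 3|U|$ and every $\Gamma_0$-edge incident to $U$ lies inside $W$. First dispose of the $SMALL$ vertices: by (P3) no two vertices of $SMALL(G)$ are within distance $4$, so $U$ contains at most one small vertex and its retained neighborhood has size $\le 10\ln n$, which is negligible; thus essentially all vertices of $U$ are non-small and each independently chose its $d_0$ edges among its $\ge d_0$ ($\le 10\ln n$ by (P1)) $G$-neighbors. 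For the bad event to occur, every non-small $v \in U$ must have chosen \emph{all} $d_0$ of its $E(v)$-edges with the other endpoint inside $W$; the number of $G$-neighbors of $v$ inside $W$ is $e_G(\{v\}, W)$, and $\sum_{v\in U} e_G(\{v\},W) \le 2 e_G(W) + e_G(U,W\setminus U) \le 2\,|W|\ln^{3/4}n$ by (P4) (using $|W| < 3|U| \le 3n/\ln^{1/2}n$). So the probability, conditioned on the degrees, that a given $v$ makes this bad choice is at most $\binom{e_G(\{v\},W)}{d_0}/\binom{d(v)}{d_0} \le (e_G(\{v\},W)/d_0)^{d_0}$ roughly, and by independence across $v$ the probability that this happens for a specific $U$ is at most $\prod_{v\in U}\big(e_G(\{v\},W)/d_0\big)^{d_0}$; by AM--GM this product is maximized when the $e_G(\{v\},W)$ are equal, giving a bound like $\big(C\ln^{3/4}n / d_0\big)^{d_0|U|} = \big(C'/\delta_0 \cdot \ln^{-1/4}n\big)^{d_0|U|}$. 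Taking the union bound over the $\le \binom{n}{|U|}\binom{n}{2|U|} \le (en/|U|)^{3|U|}$ choices of $U$ and its image, one needs $(en/|U|)^{3} \cdot (C'/\delta_0)^{d_0}\ln^{-d_0/4}n < 1$ for every $|U|$ in range; since $d_0 = \lfloor\delta_0\ln n\rfloor$, the factor $\ln^{-d_0/4}n = \exp(-\Theta(\delta_0\ln n\cdot\ln\ln n))$ beats $(en/|U|)^3 \le e^{O(\ln n)}$ once $\delta_0$ is a fixed (small) constant, so each term is $n^{-\omega(1)}$ and the sum over the $O(n)$ values of $|U|$ is $o(1)$. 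The main care needed is bookkeeping the conditional-probability step (the choices $E(v)$ are independent across $v$, and conditioning on $G$ having (P1)--(P6) fixes all the relevant degrees and edge counts), and choosing the constants $\delta_0$, $C'$ so that the small-set union bound and the deterministic large-set argument both go through simultaneously.
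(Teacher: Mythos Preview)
Your plan inverts the paper's logic, and the inversion breaks down at the large-set step. You claim that for $|U|$ between $n/\ln^{1/2}n$ and $n/4$ the expansion follows \emph{deterministically} from {\bf (P1)}--{\bf (P6)}: you want to upper-bound the number of $G$-edges between $U$ and a small putative neighborhood $W$ via {\bf (P5)} ``or in the largest range {\bf (P6)}'', and compare with the $\approx d_0|U|$ edges that the non-small vertices of $U$ contribute to $\Gamma_0$. But {\bf (P5)} only applies when $|U|\le n/\ln^{1/2}n$, so it gives nothing in this range, and {\bf (P6)} is a \emph{lower} bound on $e_G(U,W)$, not an upper bound; there is simply no property among {\bf (P1)}--{\bf (P6)} that bounds from above the number of $G$-edges inside a set of linear size. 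Nor can a deterministic argument work here: all $d_0$ chosen edges of every $v\in U$ could, in principle, land in $U\cup W$ even when $|U\cup W|<3n/4$, since $d_G(v)\le 10\ln n$ and nothing forces $v$'s $G$-neighborhood to spread out. The paper handles this range by first proving a \emph{probabilistic} property {\bf (P7)} of $\Gamma_0$ (an edge survives sparsification between any two disjoint sets of size $\lceil n/\ln^{1/2}n\rceil$, using {\bf (P6)} to guarantee $\Theta(n)$ candidate $G$-edges and then showing one is retained with probability $1-e^{-\Theta(n)}$), and then uses {\bf (P7)} to conclude that any such $S_2$ has $|V\setminus(S_2\cup N_{\Gamma_0}(S_2))|<n/\ln^{1/2}n$.

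Conversely, for \emph{small} sets the paper argues entirely deterministically from {\bf (P3)}, {\bf (P4)}, {\bf (P5)}: every $v\notin SMALL$ has $\Gamma_0$-degree $\ge d_0$, {\bf (P4)} bounds $e_{\Gamma_0}(S_2)$, so $e_{\Gamma_0}(S_2,V\setminus S_2)>d_0|S_2|/2$, and then {\bf (P5)} forces $|N_{\Gamma_0}(S_2)|>|S_2|\ln^{1/4}n$. Your probabilistic union-bound for small sets is thus unnecessary (and as written needs repair: you apply {\bf (P4)} to $W$ of size up to $3n/\ln^{1/2}n$, outside its stated range, and you should note that AM--GM on the product actually requires the concavity of $\log$ rather than convexity). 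Finally, the edge-count paragraph is overcomplicated: by definition $v\in SMALL(G)$ means $d_G(v)<d_0$, so $|E(v)|\le d_0$ holds for every vertex and $|E(\Gamma_0)|\le d_0 n$ is immediate.
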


\begin{proof}
Since by definition $|E(v)|\le d_0$ for every $v\in V$, it follows immediately that $|E(\Gamma_0|\le d_0n$.
We now prove that typically $\Gamma_0$ has the following property:
\begin{itemize}
\item[{\bf (P7)}] For every pair of disjoint sets $U,W$ of size $|U|=|W|=\left\lceil\frac{n}{\ln^{1/2}n}\right\rceil$, $\Gamma_0$ has at least one edge between $U$ and $W$.
\end{itemize}

Fix sets $U,W$ as above. We know by {\bf (P6)} that $G$ has at least $0.5n$ edges between $U$ and $W$. For a vertex $u\in U$, the probability that none of the edges between $u$ and $W$ falls into $E(u)$ is at most
$$
\frac{\binom{d_G(u)-d_G(u,W)}{d_0}}{\binom{d_G(u)}{d_0}}\le e^{-\frac{d_0\cdot d_G(u,W)}{d_G(u)}}\le e^{-\frac{d_0}{10\ln n}{\cdot d_G(u,W)}}
$$
by {\bf (P1)}. Hence the probability that none of the vertices $u$ from $U$ chooses an edge between $u$ and $W$ to be put into its set $E(u)$ is at most:
$$
\prod_{u\in U}e^{-\frac{d_0}{10\ln n}\cdot d_G(u,W)}= e^{-\frac{d_0}{10\ln n}\cdot e_G(U,W)}=e^{-\Theta(n)}\,.
$$
Applying the union bound over all choices of $U,W$ gives the desired claim.

We now claim that for every graph $G$, and every its subgraph $\Gamma_0$ of minimum degree 2 satisfying properties {\bf (P2)}, {\bf (P3)}, {\bf (P4)}, {\bf (P5)}, {\bf (P7)}, the subgraph $\Gamma_0$ is an $(n/4,2)$-expander. In order to verify this claim, let $S\subset [n]$ be a subset of size $|S|\le n/4$. Denote $S_1=S\cap SMALL(G)$, $S_2=S-SMALL(G)$. Consider first the case where $|S_2|\le \frac{n}{\ln^{1/2}n}$. Since $\delta(\Gamma_0)\ge 2$, and all vertices from $SMALL$ are at distance more than 4 from each other by {\bf (P3)}, we obtain: $|N_{\Gamma_0}(S_1)|\ge 2|S_1|$. As for vertices from $S_2$, they are all of degree at least $d_0$ in $\Gamma_0$. The set $S_2$ spans at most $|S_2|\cdot \ln^{3/4}n$ edges in $G$, and thus in $\Gamma_0$, according to {\bf (P4)}. It thus follows that $e_{\Gamma_0}(S_2,V-S_2)\ge d_0|S_2|-2e_{\Gamma_0}(S_2)> \frac{d_0|S_2|}{2}$. Hence $|N_{\Gamma_0}(S_2)|\ge |S_2|\cdot \ln^{1/4}n$, by  {\bf (P5)}. Finally, notice that, due to the non-existence of short paths connecting $SMALL(G)$ again, the set $S_1\cup N_{\Gamma_0}(S_1)$ contains only one vertex from $u\cup N_{\Gamma_0}(u)$ for every $u\in S_2$ (here we use the fact the forbidden paths have length at most 4). Therefore, $|(S_1\cup N_{\Gamma_0}(S_1)) \cap (S_2\cup N_{\Gamma_0}(S_2))|\le |S_2|$. Altogether,
\begin{eqnarray*}
|N_{\Gamma_0}(S)|&=&|N_{\Gamma_0}(S_2)\setminus S_1|+|N_{\Gamma_0}(S_1)-(S_2\cup N_{\Gamma_0}(S_2))|\\
&\ge& |S_2| (\ln^{1/4}n-1)+2|S_1|-|S_2|\ge 2(|S_1|+|S_2|)=2|S|\,,
\end{eqnarray*}
as required. The complementary case $\frac{n}{\ln^{1/2}n}\le |S_2|\le \frac{n}{4}$ is very simple: by property {\bf (P7)}, such $S_2$ misses at most $n/\ln^{1/2}n$ vertices in its neighborhood in $\Gamma_0$, also $|S_1|\le |SMALL(G)|\le n^{0.3}$ by {\bf (P2)}. In follows that
$|N_{\Gamma_0}(S)|\ge n-\frac{n}{\ln^{1/2}n}-|S_2|-|SMALL(G)|\ge \frac{n}{2}$.
\end{proof}

\medskip

Notice that every $(\frac{n}{4},2)$-expander $\Gamma$ on $n$ vertices is necessarily connected. Indeed, if such $\Gamma$ is not connected, then consider its connected component $C$ of size $|C|\le \frac{n}{2}$, and take $U$ to be an arbitrary subset of $C$ of size $|U|=\min\{\left\lfloor\frac{n}{4}\right\rfloor,|C|\}$. Then the external neighborhood of $U$ in $\Gamma$ has size at least $2|U|>|C-U|$ by our expansion assumption, and falls entirely within $C$ --- a contradiction.

As we have stated already in this text, expanders are not necessarily Hamiltonian themselves, but they are amenable to reaching Hamiltonicity by adding extra (random) edges, as they contain many boosters. However, in our circumstances we do not have extra time for sprinkling, and the required boosters should come from within the already existing edges of the random graph. Fortunately, a random graph $G(n,m)$ with $m=m(n)$ in the relevant range has {\bf whp} a booster with respect to any sparse expander it contains, as given by the following lemma.

\begin{lemma}\label{boosters}
Let $\tilde{G}=(G_i)_{i=0}^N$ be a random graph process on $n$ vertices. Denote $G=G_{\tau_2}$, where $\tau_2=\tau_2(\tilde{G})$ is the hitting time for having minimum degree two in $\tilde{G}$. Assume the constant $\delta_0$ is small enough. Then {\bf whp} for every $(n/4,2)$-expander $\Gamma\subset G$ with $V(\Gamma)=V(G)$ and $|E(\Gamma)|\le d_0n+n$, $\Gamma$ is Hamiltonian, or  $G$ contains at least one booster with respect to $\Gamma$.
\end{lemma}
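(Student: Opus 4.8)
\noindent The plan is to trade the inconveniently distributed graph $G=G_{\tau_2}$ for an ordinary random graph $G(n,m)$ and then run a union bound over all sparse non-Hamiltonian expanders, feeding each into Corollary~\ref{cor-Posa3} to obtain $\Omega(n^2)$ boosters for it. First I would fix a function $\omega_0=\omega_0(n)\to\infty$ with $\omega_0=o(\ln n)$ and set $m^\pm=\frac{n(\ln n+\ln\ln n\pm\omega_0)}{2}$. By Proposition~\ref{mindeg2}, read on the process through $\tau_2\le m\iff\delta(G_m)\ge2$, we have $m^-<\tau_2\le m^+$ \textbf{whp}; the key feature is that this window has width $m^+-m^-=n\omega_0=o(n\ln n)$ whereas $m^-=\Theta(n\ln n)$. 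On this likely event $G_{\tau_2}\subseteq G_{m^+}$, and if some non-Hamiltonian $(n/4,2)$-expander $\Gamma$ with $V(\Gamma)=[n]$ and $|E(\Gamma)|\le d_0n+n$ violated the lemma inside $G_{\tau_2}$ --- that is, $\Gamma\subseteq G_{\tau_2}$ but no edge of $G_{\tau_2}$ lying outside $\Gamma$ is a booster of $\Gamma$ --- then $E(\Gamma)\subseteq E(G_{m^+})$; $\Gamma$ is connected, every $(n/4,2)$-expander being so (as noted after Lemma~\ref{sparse-exp}); hence $\Gamma$ has more than $n^2/32$ boosters by Corollary~\ref{cor-Posa3}; yet every booster of $\Gamma$ present in $G_{m^+}$ must be one of the $\le m^+-\tau_2\le n\omega_0$ edges inserted after time $\tau_2$. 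So it is enough to prove that \textbf{whp} $G_{m^+}\sim G(n,m^+)$ contains no non-Hamiltonian $(n/4,2)$-expander $\Gamma$ on $[n]$ with $|E(\Gamma)|\le d_0n+n$ having at most $n\omega_0$ of its boosters among the edges of $G_{m^+}$.

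\noindent Next I would fix such a $\Gamma$ and bound the probability that it is bad. Write $q'=|E(\Gamma)|$; an $(n/4,2)$-expander has minimum degree at least $2$, so $n\le q'\le q:=d_0n+n$, and $\Gamma$ has some $b>n^2/32$ boosters. By~(\ref{bin2}), $\Pr[E(\Gamma)\subseteq E(G_{m^+})]\le(m^+/N)^{q'}$. Conditioned on $E(\Gamma)\subseteq E(G_{m^+})$, the remaining $m^+-q'$ edges of $G_{m^+}$ form a uniformly random $(m^+-q')$-subset of the $N-q'$ non-edges of $\Gamma$, exactly $b$ of which are boosters, so the number of boosters of $\Gamma$ present in $G_{m^+}$ is hypergeometric with mean $\mu=(m^+-q')b/(N-q')$. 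For $\delta_0$ small one has $m^+-q'\ge\frac{n\ln n}{4}$, hence $\mu=\Theta(n\ln n)$; since $n\omega_0=o(\mu)$, a Chernoff bound for the lower tail of the hypergeometric distribution gives $\Pr[\,G_{m^+}\text{ has }\le n\omega_0\text{ boosters of }\Gamma\mid E(\Gamma)\subseteq E(G_{m^+})\,]\le e^{-cn\ln n}$ for an absolute constant $c>0$.

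\noindent Finally I would union over $\Gamma$: there are at most $\binom{N}{q'}$ choices with $q'$ edges, so the total bad probability is at most
\[
e^{-cn\ln n}\sum_{q'=n}^{q}\binom{N}{q'}\Big(\tfrac{m^+}{N}\Big)^{q'}\ \le\ e^{-cn\ln n}\sum_{q'=n}^{q}\Big(\tfrac{e\,m^+}{q'}\Big)^{q'}.
\]
The function $q'\mapsto(e\,m^+/q')^{q'}$ increases for $q'<m^+$, hence throughout $[n,q]$ (note $q<m^+$ for $\delta_0$ small), so each term is at most $(e\,m^+/q)^q\le(e/\delta_0)^q=e^{\varepsilon(\delta_0)n\ln n}$, where $\varepsilon(\delta_0)=O(\delta_0\ln(1/\delta_0))\to0$ as $\delta_0\to0$ (using $e\,m^+/q\le e/\delta_0$ and $q=\Theta(\delta_0 n\ln n)$). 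Taking $\delta_0$ small enough that $\varepsilon(\delta_0)<c$, the sum is $n^{O(1)}e^{-(c-\varepsilon(\delta_0))n\ln n}=o(1)$, which completes the argument.

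\noindent The hard part is the first step --- the descent from $G_{\tau_2}$ to a binomial model. One cannot argue about $G_{\tau_2}$ itself, its law being unwieldy, and one cannot afford to pass all the way up to $G(n,n\ln n)$ either: for a minimal expander $\Gamma$ the number of its boosters typically present is only $\Theta(n\ln n)$, which with the honest constants is \emph{smaller} than the $m_2-\tau_2$ edges one would have to throw away. The reduction above works only because $\tau_2$ is concentrated tightly enough to be confined to a window of width $o(n\ln n)$. A secondary point --- and the reason $\delta_0$ must be small --- is that the number of sparse subgraphs one must union over is $e^{\varepsilon(\delta_0)n\ln n}$, which has to be beaten by the $e^{-cn\ln n}$ saving coming from the abundance of boosters.
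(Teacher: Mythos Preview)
Your argument is correct, and the heart of it --- the union bound over sparse non-Hamiltonian expanders $\Gamma$, each contributing a factor $(m/N)^{|E(\Gamma)|}$ for containment and a factor $e^{-\Theta(n\ln n)}$ from the $n^2/32$ boosters guaranteed by Corollary~\ref{cor-Posa3} --- matches the paper's. Where you diverge is in the reduction from $G_{\tau_2}$ to a fixed-$m$ model. The paper does not need the tight window $[m^-,m^+]$ of width $o(n\ln n)$ at all: it simply takes a union bound over every $m$ in the crude interval $[m_1,m_2]=[\tfrac{n\ln n}{2},n\ln n]$, using that $G_m\sim G(n,m)$ marginally and that the bad event at time $m$ (\emph{zero} boosters of some sparse $\Gamma$ present in $G_m$) has probability $e^{-\Theta(m)}$, which absorbs the extra factor of $m_2-m_1=O(n\ln n)$ with room to spare. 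Your route instead fixes a single target $m=m^+$, but then has to allow up to $m^+-\tau_2$ boosters of $\Gamma$ in $G_{m^+}$; as you correctly note, this forces the window to be narrow, since otherwise the allowance exceeds the mean and the hypergeometric tail bound is vacuous. So you trade the paper's harmless outer sum over $m$ for the concentration of $\tau_2$ from Proposition~\ref{mindeg2} plus a hypergeometric Chernoff bound (which, incidentally, the paper's preliminaries state only for the binomial case, though the hypergeometric version is standard). Both routes work; the paper's is shorter and needs only the crude localisation $m_1\le\tau_2\le m_2$.
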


\begin{proof}
Recall that every connected $(k,2)$-expander $\Gamma$ is Hamiltonian or has at least $k^2/2$ boosters, by Corollary \ref{cor-Posa3}. In order for a random graph $G$ to violate the assertion of the lemma, $G$ should contain some $(n/4,2)$-expander $\Gamma$ with few edges, but none of at least as many as $n^2/32$ boosters relative to $\Gamma$ (note that the required connectivity of $G$ is delivered by the expansion of $\Gamma$, as explained above). Since we cannot pinpoint the exact location of $\tau_2$, we instead take the union bound over all $m_1\le m\le m_2$, as {\bf whp} $\tau_2$ is located in this interval. So the estimate is:
\begin{equation}\label{eq1}
\sum_{m=m_1}^{m_2} \sum_{i\le d_0n+n}\frac{\binom{N}{i}\cdot\binom{N-i-\frac{n^2}{32}}{m-i}}{\binom{N}{m}}+o(1)
\end{equation}
(we sum over all relevant values of $m$, adding $o(1)$ in the end to account for the probability that $\tau_2$ falls outside the interval $[m_1,m_2]$; then we sum over all possible values $i$ of $|E(\Gamma)|$; then we bound from above  by $\binom{N}{i}$ the number of $(n/4,2)$-expanders with $i$ edges in the complete graph on $n$ vertices, and finally we require the edges of $\Gamma$ to be present in $G(n,m)$, but all at least $n^2/32$ boosters relative to $\Gamma$ to be omitted). The ratio of the binomial coefficients above can be estimated as follows:
$$
\frac{\binom{N-i-\frac{n^2}{32}}{m-i}}{\binom{N}{m}}\le \frac{e^{-\frac{\frac{n^2}{32}(m-i)}{N-i}}\binom{N-i}{m-i}}
{\binom{N}{m}}\le e^{-\frac{m}{17}}\left(\frac{m}{N}\right)^i\,,
$$
assuming that $\delta_0$ in the definition of $d_0$ is small enough. We can thus estimate the $i$-th summand in (\ref{eq1}) as follows:
$$
\binom{N}{i}e^{-\frac{m}{17}}\left(\frac{m}{N}\right)^i\le \left(\frac{eN}{i}\cdot\frac{m}{N}\right)^i\cdot e^{-\frac{m}{17}}=\left(\frac{em}{i}\right)^i\cdot e^{-\frac{m}{17}}
\le\left(\frac{em}{d_0n+n}\right)^{d_0n+n}\cdot e^{-\frac{m}{17}}=o(n^{-3})\,,
$$
again for $\delta_0$ small enough (it is even exponentially, and not just polynomially, small in $n$). Summing over all $i\le d_0n+n$ and then over all $m_1\le m\le m_2$ establishes the required claim.
\end{proof}

\medskip

The stage is now set to deliver the final  punch of the proof of Theorem \ref{th-Ham-hit}. Recall that our goal is to prove that for a random graph process $\tilde{G}$, {\bf whp} the graph $G=G_{\tau_2}$ at the very moment $\tau_2$ when the minimum degree becomes 2 is already Hamiltonian. First, observe that by Lemma \ref{sparse-exp} {\bf whp} $G$ contains an $(n/4,2)$-expander $\Gamma_0$ with at most $d_0n$ edges. We start with this sparse expander $\Gamma_0$ and keep adding boosters to it until the current graph $\Gamma_i$ becomes Hamiltonian; obviously at most $n$ steps (edge additions) will be needed to reach Hamiltonicity. If we ever get stuck before reaching Hamiltonicity, say at step $i\ge 0$, then the current graph $\Gamma_i$ is still an $(n/4,2)$-expander, is connected and non-Hamiltonian, has at most $d_0n+n$ edges, but the graph $G$ has no boosters with respect to $\Gamma_i$. This however does not happen typically due to Lemma \ref{boosters}. If so, the process of edge addition eventually completes with a subgraph $\Gamma_i\subset G$, which is Hamiltonian. The proof is complete!

\end{document}